\providecommand{\U}[1]{\protect\rule{.1in}{.1in}}
\providecommand{\U}[1]{\protect\rule{.1in}{.1in}}
\providecommand{\U}[1]{\protect\rule{.1in}{.1in}}
\providecommand{\U}[1]{\protect\rule{.1in}{.1in}}
\providecommand{\U}[1]{\protect\rule{.1in}{.1in}}
\providecommand{\U}[1]{\protect\rule{.1in}{.1in}}
\providecommand{\U}[1]{\protect\rule{.1in}{.1in}}
\providecommand{\U}[1]{\protect\rule{.1in}{.1in}}
\providecommand{\U}[1]{\protect\rule{.1in}{.1in}}
\providecommand{\U}[1]{\protect\rule{.1in}{.1in}}
\providecommand{\U}[1]{\protect\rule{.1in}{.1in}}
\providecommand{\U}[1]{\protect\rule{.1in}{.1in}}
\providecommand{\U}[1]{\protect\rule{.1in}{.1in}}
\providecommand{\U}[1]{\protect\rule{.1in}{.1in}}
\providecommand{\U}[1]{\protect\rule{.1in}{.1in}}
\providecommand{\U}[1]{\protect\rule{.1in}{.1in}}
\providecommand{\U}[1]{\protect\rule{.1in}{.1in}}
\providecommand{\U}[1]{\protect\rule{.1in}{.1in}}
\providecommand{\U}[1]{\protect\rule{.1in}{.1in}}
\providecommand{\U}[1]{\protect\rule{.1in}{.1in}}
\providecommand{\U}[1]{\protect\rule{.1in}{.1in}}
\providecommand{\U}[1]{\protect\rule{.1in}{.1in}}
\providecommand{\U}[1]{\protect\rule{.1in}{.1in}}
\providecommand{\U}[1]{\protect\rule{.1in}{.1in}}
\providecommand{\U}[1]{\protect\rule{.1in}{.1in}}
\providecommand{\U}[1]{\protect\rule{.1in}{.1in}}
\providecommand{\U}[1]{\protect\rule{.1in}{.1in}}
\providecommand{\U}[1]{\protect\rule{.1in}{.1in}}
\providecommand{\U}[1]{\protect\rule{.1in}{.1in}}
\newtheorem{theorem}{Theorem}
{}
\newtheorem{conclusion}{Conclusion}
\newtheorem{corollary}{Corollary}
\newtheorem{definition}{Definition}
\newtheorem{example}{Example}
\newtheorem{proposition}{Proposition}
\newtheorem{remark}{Remark}
\newtheorem{summary}{Summary}
\newenvironment{proof}[1][Proof]{\textbf{#1.} }{\ \rule{0.5em}{0.5em}}
\begin{document}

\title{On the spectral properties of the Schrodinger operator with a periodic
PT-symmetric potential}
\author{O. A. Veliev\\{\small \ Depart. of Math., Dogus University, }\\{\small Ac\i badem, 34722, Kadik\"{o}y, \ Istanbul, Turkey.}\\\ {\small e-mail: oveliev@dogus.edu.tr}}
\date{}
\maketitle

\begin{abstract}
In this paper we investigate the spectrum and spectrality of the
one-dimensional Schrodinger operator with a periodic PT-symmetric
complex-valued potential.

Key Words: Schrodinger operator, PT-symmetric periodic potential, Real spectrum.

AMS Mathematics Subject Classification: 34L05, 34L20.

\end{abstract}

\section{ Introduction and Preliminary Facts}

In this paper we investigate the one dimensional Schr\"{o}dinger operator
$L(q)$ generated in $L_{2}(-\infty,\infty)$ by the differential expression
\begin{equation}
-y^{^{\prime\prime}}(x)+q(x)y(x),
\end{equation}
where $q$ is complex-valued, locally integrable, periodic and PT-symmetric.
Without loss of generality, we assume that the period of $q$ is $1$ and the
integral of $q$ over $[0,1]$ is zero. Thus%
\begin{equation}
q\in L_{1}[0,1],\text{ }\int_{0}^{1}q(x)dx=0,\text{ }q(x+1)=q(x),\text{
}\overline{q(-x)}=q(x)\text{ (a.e.).}%
\end{equation}

A basic mathematical question of PT-symmetric quantum mechanics concerns the
reality of the spectrum of the considered Hamiltonian (see [2, 15 and
references of them]). In the first papers [1,3,5,6,10] about the PT-symmetric
periodic potential, the appearance and disappearance of real energy bands for
some complex-valued PT-symmetric periodic potentials under perturbations have
been reported. Shin [17] showed that the appearance and disappearance of such
real energy bands imply the existence of nonreal band spectra. He involved
some condition on the Hill discriminant to show the existence of nonreal
curves in the spectrum. Caliceti and Graffi [4] found explicit condition on
the Fourier coefficient of the potential providing the nonreal spectra for
small potentials. Besides, they proved that if all gaps of the spectrum of the
Hill operator $L(q)$ with distributional potential $q$ are open and the width
of the $n$-th gap does not vanish as $n\rightarrow\infty,$ then the spectrum
of $L(q)+gW,$ where $W$ is a bounded periodic function and $g$ is a small
number, is real. This result can not be used for the locally integrable
periodic potentials (2), since the width of the $n$-th gap vanishes as
$n\rightarrow\infty$.

In this paper, we first consider the general spectral property of the spectrum
of $L(q)$ under conditions (2) and prove that the main part of its spectrum is
real and contains the large part of $[0,\infty).$ Using this we find necessary
and sufficient condition on the potential for finiteness of the number of the
nonreal arcs in the spectrum of $L(q)$. Besides we find necessary and
sufficient conditions for the equality of the spectrum of $L(q)$ to the half
line. Moreover, we consider the connections between spectrality of $L(q)$ and
the reality of its spectrum for some class of PT-symmetric periodic
potentials. Finally, we find explicit conditions on the potential $q$ for
which the number of gaps in the real part of the spectrum of $L(q)$ is finite.

Now let us list the well-known results, as Summary 1-Sammary 7, about $L(q)$
which will be essentially used in this paper. Note that we formulate the
well-known results from the books [7, 19] and the papers [11, 13, 14, 16,17]
in the suitable form for this paper and by using the unique notation, since
the different notations were used in those references.

\begin{summary}
The spectrum $\sigma(L)$ of the operator $L$ is the union of the spectra
$\sigma(L_{t})$ of the operators $L_{t}$ for $t\in(-\pi,\pi]$ generated in
$L_{2}[0,1]$ by (1) and the boundary conditions
\begin{equation}
y(1)=e^{it}y(0),\text{ }y^{^{\prime}}(1)=e^{it}y^{^{\prime}}(0).
\end{equation}

\end{summary}

\begin{summary}
The eigenvalues of $L_{t}$ are the roots of the characteristic equation
\begin{equation}
F(\lambda)=2\cos t,
\end{equation}
where $F(\lambda):=\varphi^{\prime}(1,\lambda)+\theta(1,\lambda)$ is the Hill
discriminant, $\theta$ and $\varphi$ are the solutions of
\begin{equation}
-y^{^{\prime\prime}}(x)+q(x)y(x)=\lambda y(x)
\end{equation}
satisfying the initial conditions $\theta(0,\lambda)=\varphi^{\prime
}(0,\lambda)=1,$ $\theta^{\prime}(0,\lambda)=\varphi(0,\lambda)=0.$
\end{summary}

The following 2 summaries immediately follows from summaries 1 and 2.

\begin{summary}
$\sigma(L_{-t})=\sigma(L_{t})=\sigma(L_{t+2\pi})$ and $\lambda\in\sigma
(L_{t})$ if and only if there exist a solution $\Psi(x,\lambda)$ of (5)
satisfying $\Psi(x+1,\lambda)=e^{it}\Psi(x,\lambda)$ for all $x.$
\end{summary}

\begin{summary}
\textbf{ }$\lambda\in\sigma(L)$ if and only if $F(\lambda)\in\lbrack$\ $-2,2]$.
\end{summary}

The following summary was proved in [16, 20]

\begin{summary}
The spectrum $\sigma(L)$ consist of analytic arcs whose endpoints are the
eigenvalues of $L_{t}$ for $t=0,\pi$ and the multiple eigenvalues of $L_{t}$
for $t\in(0,\pi).$ Moreover $\sigma(L)$ does not contain the closed curves,
that is, the resolvent set $\mathbb{C}\backslash\sigma(L)$ is connected.
\end{summary}

Let (2) holds. Then using Summary 3 one can readily see that the function
$\Phi(x,\lambda):=$ $\overline{\Psi(-x,\lambda)}$ satisfies the equation
$-y^{^{\prime\prime}}(x)+q(x)y(x)=\overline{\lambda}y(x)$ and the equality
\[
\Phi(x+1,\lambda)=\overline{\Psi(-x-1),\lambda)}=\overline{e^{-it}%
\Psi(-x,\lambda)}=e^{it}\Phi(x,\lambda).
\]
It means that $\overline{\lambda}\in\sigma(L_{t}).$ Hence\textbf{ }we have

\begin{summary}
If (2)holds then the following implications hold%
\[
\lambda\in\sigma(L_{t})\Longrightarrow\overline{\lambda}\in\sigma
(L_{t}),\text{ }\lambda\in\sigma(L)\Longrightarrow\overline{\lambda}\in
\sigma(L).
\]
The first and second implications were proved in [11] and [17] respectively.
\end{summary}

The following summary is proved in [17] (see Theorem 3 and Corollary 4).

\begin{summary}
Suppose (2) holds. Then $F(\lambda)$ is real for all real $\lambda,$ where
$F(\lambda)$ is defined in (4). If $F(\lambda)\in(-2,2)$ and $F^{^{\prime}%
}(\lambda)=0$ or $F(\lambda)=\pm2$, $F^{^{\prime}}(\lambda)=0$ and
$F^{^{\prime\prime}}(\lambda)=0$ then $\sigma(L(q))$ contains nonreal number
in any neighborhood of $\lambda$.
\end{summary}

In Section 2 we use these summaries and the uniform asymptotic formulas for
the eigenvalues of $L_{t}(q)$ obtained in [22] and [23] and formulated as
Summary 8 and Summary 9 in order to describe the spectrum of the operator
$L(q)$ with the potential (2). In the last section using the results of
Section 2 and the papers [23, 27] we consider in detail the shape of the
spectrum and the asymptotic connection of reality of $\sigma(L)$ and
spectrality of $L$.

\section{On the Spectrum of $L(q)$}

\textbf{ }In the case $q=0$ the eigenvalues and eigenfunctions of $L_{t}(q)$
are $(2\pi n+t)^{2}$ and $e^{i(2\pi n+t)x}$ for $n\in\mathbb{Z}$ respectively.
In [27] we proved that the eigenvalues of $L_{t}$ can be numbered (counting
the multiplicity) by elements of $\mathbb{Z}$ such that, for each $n$ the
function $\lambda_{n}(t)$ is a piecewise differentiable on $[0,\pi]$ and
$\lambda_{n}(-t)=\lambda_{n}(t)$ (see Summary 3). Thus
\begin{equation}
\sigma(L(q))=%
{\textstyle\bigcup\limits_{n\in\mathbb{Z}}}
\Gamma_{n},
\end{equation}
where $\Gamma_{n}=\left\{  \lambda_{n}(t):t\in\lbrack0,\pi]\right\}  $ (see
(16) of [27]). In future we call $\Gamma_{n}$ as band of $\sigma
(L(q))$.\ Moreover, it follows from the construction of $\Gamma_{n}$ that
\begin{equation}
\Gamma_{n}\cap\sigma\left(  L_{t}(q)\right)  =\left\{  \lambda_{n}(t)\right\}
,\text{ }\left\vert \Gamma_{n}\right\vert <\infty,\text{ }\lim_{n\rightarrow
\infty}d(\Gamma_{n},0)=\infty,
\end{equation}
where $\left\vert \Gamma_{n}\right\vert $ is the length of $\Gamma_{n}$ and
$d(\Gamma_{n},0)$ is the distance from $\Gamma_{n}$ to origin. Besides one can
readily see the following properties of the bands $\Gamma_{n}$.

\begin{proposition}
$(a)$ $\Gamma_{n}$ is a single open curve with the end points $\lambda_{n}(0)$
and $\lambda_{n}(\pi)$.

$(b)$ Two bands $\Gamma_{n}$ and $\Gamma_{m}$ may have at most one common point.
\end{proposition}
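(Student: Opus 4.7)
Both parts will follow from the characteristic equation of Summary 2 combined with the topology of $\sigma(L)$ given by Summary 5.

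\textbf{Part (a).} I would first show that the map $t\mapsto\lambda_{n}(t)$ is injective on $[0,\pi]$. By Summary 2 each eigenvalue satisfies $F(\lambda_{n}(t))=2\cos t$; hence any coincidence $\lambda_{n}(s)=\lambda_{n}(t)$ forces $\cos s=\cos t$, which in turn gives $s=t$ since $\cos$ is strictly decreasing on $[0,\pi]$. Being continuous (piecewise differentiable) and injective on the compact interval $[0,\pi]$, the map $\lambda_{n}$ is a homeomorphism onto its image $\Gamma_{n}$. Therefore $\Gamma_{n}$ is a simple (non-self-intersecting) arc whose two endpoints $\lambda_{n}(0)$ and $\lambda_{n}(\pi)$ are distinct, because $\cos 0\neq\cos\pi$.

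\textbf{Part (b).} Assume for contradiction that $\Gamma_{n}\cap\Gamma_{m}$ (with $n\neq m$) contains two distinct points $a$ and $b$. By (a) each band is a simple arc, so each contains a unique sub-arc joining $a$ to $b$; denote them $A_{n}\subset\Gamma_{n}$ and $A_{m}\subset\Gamma_{m}$. If $A_{n}\neq A_{m}$, a standard topological splicing produces a Jordan curve inside $A_{n}\cup A_{m}\subset\sigma(L)$: start at $a$, walk along $A_{n}$ until the first departure from $A_{m}$, continue through $A_{n}\setminus A_{m}$ until re-entering $A_{m}$ at some point $c$, and close the loop along $A_{m}$ back to the point of first departure. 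Because $\Gamma_{n}$ and $\Gamma_{m}$ are simple arcs, the resulting loop is a Jordan curve lying in $\sigma(L)$, directly contradicting Summary 5 (absence of closed curves in $\sigma(L)$). In the degenerate case $A_{n}=A_{m}$, for every $\lambda$ on the shared sub-arc the equation $\lambda=\lambda_{n}(t_{n})=\lambda_{m}(t_{m})$ together with $F(\lambda)=2\cos t_{n}=2\cos t_{m}$ gives $t_{n}=t_{m}$; hence $\lambda_{n}(t)=\lambda_{m}(t)$ on a whole interval of $t$-values, which is incompatible with the distinctness of branches guaranteed by the labelling in [27].

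The main obstacle I expect is the topological splicing step: one must verify carefully that the closed loop constructed is genuinely simple, and this is precisely where the simpleness of both $\Gamma_{n}$ and $\Gamma_{m}$ secured in (a) is essential. Once the $F$-based injectivity is in place, the degenerate subcase $A_{n}=A_{m}$ is comparatively routine.
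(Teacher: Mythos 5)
Your part (a) is correct and is essentially the paper's own argument: injectivity of $t\mapsto\lambda_{n}(t)$ via $F(\lambda_{n}(t))=2\cos t$ and the strict monotonicity of $\cos$ on $[0,\pi]$. In part (b) your nondegenerate case is a legitimate variant of the paper's route (the paper instead first shows two bands can share only \emph{finitely many} points, then closes a loop between two consecutive common parameter values; your direct Jordan-curve splicing of $A_{n}\cup A_{m}$ achieves the same contradiction with Summary 5, though the phrase ``first departure from $A_{m}$'' should be formulated more carefully, e.g.\ pick $p\in A_{n}\setminus A_{m}$ and take the first entry points into the closed set $A_{m}$ on either side of $p$). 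The genuine gap is your degenerate case $A_{n}=A_{m}$. You correctly deduce $\lambda_{n}(t)=\lambda_{m}(t)$ on a whole parameter interval, but then dismiss this as ``incompatible with the distinctness of branches guaranteed by the labelling in [27].'' No such guarantee exists: the numbering of [27] counts eigenvalues \emph{with multiplicity}, and distinct branches are explicitly allowed to coincide at points --- indeed they must coincide at every double eigenvalue, which is precisely how bands touch (cf.\ Proposition 2(b) of the paper). So the labelling alone excludes nothing, and this subcase is not closed.

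The correct way to kill it --- and it is the paper's central mechanism for part (b) --- is the Hill discriminant. Any coincidence $\lambda_{n}(t)=\lambda_{m}(t)$ with $n\neq m$ means $\lambda_{n}(t)$ is a multiple eigenvalue of $L_{t}$ and hence a zero of $F^{\prime}(\lambda)$. Since $F$ is a nonconstant entire function (by the asymptotics (14), say), $F^{\prime}\not\equiv0$ and its zeros form a discrete set with no finite limit point. Coincidence on a nondegenerate arc would, by the injectivity you established in (a), produce a continuum of distinct zeros of $F^{\prime}$ inside the bounded set $\Gamma_{n}$ (boundedness is the inequality $\left\vert \Gamma_{n}\right\vert <\infty$ in (7)) --- impossible. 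With this replacement your argument is complete; note that the paper invokes exactly this discreteness observation, using it up front to reduce to finitely many common points before constructing its closed curve.
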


\begin{proof}
$(a)$ It follows from (3) and (4) that $\lambda_{n}(t_{1})\neq\lambda
_{n}(t_{2})$ for $0\leq t_{1}<t_{2}\leq\pi.$ Therefore $\Gamma_{n}$ is a
single open curve and hence has two end points. Since $\lambda_{n}%
:[0,\pi]\rightarrow\Gamma_{n}$ is a piecewise differentiable functions it maps
the ends of $[0,\pi]$ to the ends of $\Gamma_{n}.$

$(b)$ The inequality $\lambda_{n}(t_{1})\neq\lambda_{m}(t_{2})$ for $t_{1}\neq
t_{2}$ implies that the bands $\Gamma_{n}$ and $\Gamma_{m}$ has a common point
if and only if there exist $t\in\lbrack0,\pi]$ such that $\lambda
_{n}(t)=\lambda_{m}(t).$ It means that $\lambda_{n}(t)$ is a multiple
eigenvalue of $L_{t}$ and hence is the roots of $F^{^{\prime}}(\lambda)=0$,
where $F(\lambda)$ is the Hill discriminant defined in Summary 2. Since
$F(\lambda)$ is a nonzero entire function, the set of zeros of $F^{^{\prime}%
}(\lambda)$ is a discrete set and has no finite limit points. Therefore, by
the inequality in (7), two bands can have only finite number common points. If
the common points of $\Gamma_{n}$ and $\Gamma_{m}$ are more than one, then
there exist $t_{1}$ and $t_{2}$ such that $0\leq t_{1}<t_{2}\leq\pi$ and
\[
\lambda_{n}(t_{1})=\lambda_{m}(t_{1}),\text{ }\lambda_{n}(t_{2})=\lambda
_{m}(t_{2}),\text{ }\lambda_{n}(t_{1})\neq\lambda_{n}(t_{2}),\text{ }%
\lambda_{n}(t)\neq\lambda_{m}(\tau)
\]
for all $t\in(t_{1},t_{2})$ and $\tau\in(t_{1},t_{2}).$ However, it implies
that
\[
\left\{  \lambda_{n}(t):t\in\lbrack t_{1},t_{2}]\right\}  \cup\left\{
\lambda_{m}(t):t\in\lbrack t_{1},t_{2}]\right\}
\]
is a closed curve which contradicts Summary 5.
\end{proof}

\begin{remark}
Note that if $\lambda_{n}(t)$ for $t\in(0,\pi)$ and $t=0,\pi$ is a multiple
eigenvalue of $L_{t}(q)$ of multiplicity $p,$ then $p$ bands of the spectrum
have common interior and end point $\lambda_{n}(t)$ respectively. In
particular, $\lambda_{n}(t)$ is a simple eigenvalue if and only if it belong
only to one band $\Gamma_{n}$.
\end{remark}

These arguments with summaries 5 and 6 yield the following

\begin{theorem}
Suppose that (2) holds.

$(a)$ If $\lambda_{n}(t_{1})$ and $\lambda_{n}(t_{2})$ are real numbers, where
$0\leq t_{1}<t_{2}\leq\pi$ then $\gamma:=\left\{  \lambda_{n}(t):t\in\lbrack
t_{1},t_{2}]\right\}  $ is an interval of the real line.

$(b)$ If the eigenvalues $\lambda_{n}(0)$ and $\lambda_{n}(\pi)$ are real
numbers then $\lambda_{n}(t)$ are real eigenvalues of $L_{t}(q)$ for all
$t\in(0,\pi)$, that is, $\Gamma_{n}$ is either $[\lambda_{n}(0),\lambda
_{n}(\pi)]$ or $[\lambda_{n}(\pi),\lambda_{n}(0)]$.

$(c)$ The spectrum of $L(q)$ is completely real if and only if all eigenvalues
of $L_{0}(q)$ and $L_{\pi}(q)$ are real, that is, the roots of $\left(
F(\lambda)\right)  ^{2}=4$ are the real numbers.
\end{theorem}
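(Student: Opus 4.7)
The plan is to establish part (a) by contradiction, using the PT-symmetry implication of Summary 6 together with the topological constraint from Summary 5 that $\sigma(L)$ contains no closed curves, and then to deduce parts (b) and (c) directly from (a).

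\textbf{Part (a).} Suppose, toward a contradiction, that $\gamma\not\subset\mathbb{R}$. Then there exists $t^{\ast}\in(t_{1},t_{2})$ with $\lambda_{n}(t^{\ast})$ nonreal. By Summary 6, the conjugate arc $\overline{\gamma}:=\{\overline{\lambda_{n}(t)}:t\in\lbrack t_{1},t_{2}]\}$ is contained in $\sigma(L)$, since $\overline{\lambda_{n}(t)}\in\sigma(L_{t})$ for each $t$. Because $\lambda_{n}(t_{1})$ and $\lambda_{n}(t_{2})$ are real, they are fixed by complex conjugation, so $\gamma$ and $\overline{\gamma}$ are two continuous arcs sharing the same pair of distinct endpoints (distinctness follows from the injectivity of $\lambda_{n}$ established in the proof of Proposition 1(a)). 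Since $\lambda_{n}(t^{\ast})$ and $\overline{\lambda_{n}(t^{\ast})}$ lie strictly in opposite open half-planes, the arcs are different. To extract a genuine closed curve cleanly, I would replace $[t_{1},t_{2}]$ by a maximal subinterval $[a,b]\subset[t_{1},t_{2}]$ on which $\operatorname{Im}\lambda_{n}(t)$ has a constant sign on $(a,b)$ and vanishes at the endpoints. Then the restricted $\gamma$ lies in one open half-plane except at $\lambda_{n}(a),\lambda_{n}(b)$, while $\overline{\gamma}$ lies in the opposite half-plane except at these same points, so $\gamma\cap\overline{\gamma}=\{\lambda_{n}(a),\lambda_{n}(b)\}$ and the union is a Jordan curve in $\sigma(L)$, contradicting Summary 5. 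Hence $\gamma\subset\mathbb{R}$, and being a connected subset of $\mathbb{R}$ (continuous image of $[t_{1},t_{2}]$), it is an interval.

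\textbf{Parts (b) and (c).} For (b), I apply (a) with $t_{1}=0$ and $t_{2}=\pi$: the whole band $\Gamma_{n}$ is then an interval of the real line whose endpoints, by Proposition 1(a), are $\lambda_{n}(0)$ and $\lambda_{n}(\pi)$, yielding the claimed description. For (c), the forward direction follows band by band from (b): if every eigenvalue of $L_{0}(q)$ and $L_{\pi}(q)$ is real, then each $\Gamma_{n}\subset\mathbb{R}$, and the representation $\sigma(L)=\bigcup_{n\in\mathbb{Z}}\Gamma_{n}$ from (6) gives $\sigma(L)\subset\mathbb{R}$. The reverse direction is immediate from $\sigma(L_{0})\cup\sigma(L_{\pi})\subset\sigma(L)$. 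Finally, by the characteristic equation (4) together with Summary 4, the eigenvalues of $L_{0}$ and $L_{\pi}$ are precisely the roots of $F(\lambda)=2$ and $F(\lambda)=-2$, i.e.\ of $(F(\lambda))^{2}=4$, which gives the equivalent formulation in terms of the Hill discriminant.

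\textbf{Main obstacle.} The only nontrivial step is turning the informal picture of $\gamma\cup\overline{\gamma}$ into a bona fide closed curve eligible for Summary 5. Self-intersections of $\gamma$ with $\overline{\gamma}$ at non-endpoint points (which \emph{a priori} might occur on the real axis if $\lambda_{n}(t)$ crosses $\mathbb{R}$ in the open interval) are the issue. The maximality trick described above, restricting to a subinterval on which $\operatorname{Im}\lambda_{n}$ has constant sign on the interior and vanishes at the ends, removes this difficulty because it forces $\gamma\setminus\{\text{endpoints}\}$ into a single open half-plane. Everything else is a routine application of the preparatory summaries and Proposition 1.
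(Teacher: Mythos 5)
Your proof is correct and follows essentially the same route as the paper: part (a) by noting that the conjugate arc $\widetilde{\gamma}=\left\{ \lambda\in\mathbb{C}:\overline{\lambda}\in\gamma\right\}$ lies in $\sigma(L)$ by Summary 6 and forms with $\gamma$ a closed curve contradicting Summary 5, and parts (b), (c) deduced from (a) together with Proposition 1(a), Summary 1, the band decomposition (6), and the characteristic equation (4). Your additional care in passing to a maximal subinterval on which $\operatorname{Im}\lambda_{n}(t)$ keeps a constant sign, so that $\gamma\cup\widetilde{\gamma}$ is a genuine Jordan curve rather than a possibly self-intersecting loop, tightens a step the paper's one-line argument leaves implicit, but it is a refinement of the same idea rather than a different approach.
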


\begin{proof}
$(a)$ If $\gamma$ is not an interval of the real line, then, by Summary 6,
$\sigma(L)$ contains a closed curve $\gamma\cup\widetilde{\gamma},$ where
$\widetilde{\gamma}=\left\{  \lambda\in\mathbb{C}:\overline{\lambda}\in
\gamma\right\}  ,$ which contradicts Summary 5.

$(b)$ Since $\lambda_{n}(0)$ and $\lambda_{n}(\pi)$ are the end points of
$\Gamma_{n}$, $(b)$ follows from $(a).$

$(c)$ If the spectrum of $L(q)$ is real, then by Summary 1 the eigenvalues of
$L_{0}(q)$ and $L_{\pi}(q)$ are real. Now suppose that all eigenvalues of
$L_{0}(q)$ and $L_{\pi}(q)$ are real. It means that the eigenvalues
$\lambda_{n}(0)$ and $\lambda_{n}(\pi)$ are real numbers for all $n.$ Then, by
$(b),$ $\Gamma_{n}$ for all $n$ and hence, by (6), $\sigma(L)$ is completely real.
\end{proof}

Now to consider, the reality of the spectrum in detail, we investigate the
points in which the spectrum ceases to be real. These points are crucial and
can be defined as follows.

\begin{definition}
A real number $\lambda\in\sigma(L)$ is said to be a left (right) complexation
point of the spectrum if there exists $\varepsilon>0$ such that $[\lambda
,\lambda+\varepsilon]\subset$ $\sigma(L)$ ( $[\lambda-\varepsilon
,\lambda]\subset$ $\sigma(L))$ and $\sigma(L)$ contains a nonreal number in
any neighborhood of $\lambda.$ Both left and right complexation points are
called complexation points.
\end{definition}

\begin{theorem}
Suppose that (2) holds. If $\lambda_{n}(t)$ is a complexation point, then it
is a multiple eigenvalue. Moreover, the multiplicity of $\lambda_{n}(t)$ is
greater than $\ 2$ if $t=0,\pi.$
\end{theorem}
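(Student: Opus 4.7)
The plan is to combine Summary 4 (which gives $\sigma(L)=\{\lambda:F(\lambda)\in[-2,2]\}$) with Summary 7 ($F$ is real-valued on $\mathbb{R}$) and analyze the local structure of $F^{-1}([-2,2])$ near the real point $\mu:=\lambda_{n}(t)$ via the Taylor expansion of the entire function $F$. The multiplicity of $\mu$ as an eigenvalue of $L_{t}$ equals the order of vanishing of $F(\lambda)-2\cos t$ at $\mu$, so the task reduces to lower bounds on $F'(\mu)$ and, in the boundary case $t\in\{0,\pi\}$, also on $F''(\mu)$.

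For the first assertion I would argue by contrapositive. Suppose $F'(\mu)\neq 0$. Then $F$ is a local biholomorphism at $\mu$; because $F$ is real-valued on $\mathbb{R}$, the local inverse sends real points to real points, so $F^{-1}(\mathbb{R})$ coincides with $\mathbb{R}$ in a neighborhood $U$ of $\mu$. I choose $U$ small enough that $F(U)\subset(-2,2)$ when $t\in(0,\pi)$, respectively so that $F(U)\cap[-2,2]$ is a real half-interval when $t\in\{0,\pi\}$ (where $F(\mu)=\pm 2$). In either case $\sigma(L)\cap U$ is a real (half-)interval and contains no nonreal numbers at all, contradicting the defining property of a complexation point. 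Hence $F'(\mu)=0$ and $\mu$ is a multiple eigenvalue.

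For the second assertion, I would suppose for contradiction that $t\in\{0,\pi\}$ and the multiplicity of $\mu$ equals $2$, so $F(\mu)=\pm 2$, $F'(\mu)=0$, and $F''(\mu)\neq 0$. The coefficient $a:=F''(\mu)/2$ is real. Writing $F(\mu+z)-F(\mu)=a z^{2}(1+h(z))$ with $h$ analytic and $h(0)=0$, and setting $w=z\sqrt{1+h(z)}$ with the branch satisfying $\sqrt{1+h(0)}=1$, I obtain a local analytic diffeomorphism $z\leftrightarrow w$ that preserves the real axis (since the Taylor coefficients of $h$ are real), and in the new coordinate $F(\mu+z)-F(\mu)=a w^{2}$. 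The condition $\mu+z\in\sigma(L)$ then becomes $a w^{2}\in[-2-F(\mu),2-F(\mu)]$, which is a one-sided interval with $0$ as an endpoint; consequently $a w^{2}$ must be real of a fixed sign, forcing (according to the signs of $a$ and $F(\mu)$) either $w\in\mathbb{R}$ or $w\in i\mathbb{R}$ near the origin. In the first sub-case $\sigma(L)$ is purely real near $\mu$ and has no nonreal points nearby; in the second, $\sigma(L)\cap\mathbb{R}$ reduces to $\{\mu\}$ in a neighborhood, so no real interval abuts $\mu$. Neither situation is compatible with $\mu$ being a complexation point, yielding the contradiction.

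The technically delicate step will be this last case analysis, essentially a holomorphic Morse-style reduction of $F$ to its quadratic part. The crucial verifications are that the change of variable $z\mapsto w$ preserves the real axis, so that the real/nonreal dichotomy for $z$ is inherited from $w$, and that the two sign sub-cases both genuinely fail one or the other of the two clauses in the definition of a complexation point.
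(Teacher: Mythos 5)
Your proof is correct, but it takes a genuinely different route from the paper's. The paper argues globally through the band structure: it uses $\sigma(L)=\bigcup_{n}\Gamma_{n}$ together with the one-point intersection property $\Gamma_{n}\cap\sigma(L_{t})=\{\lambda_{n}(t)\}$ from (7), so that a \emph{simple} complexation point would force a nonreal $\lambda$ and its conjugate $\overline{\lambda}$ (Summary 6) into the same one-point set; for a double eigenvalue at $t=0,\pi$ it tracks the two bands meeting there by continuity and produces a point $\overline{\lambda_{m}(t)}\in\sigma(L)$ lying in no band, contradicting (6). Remark 1, the numbering of [27], and the no-closed-curve property (Summary 5) are essential inputs there. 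You instead work purely locally with the discriminant: Summary 4 reduces everything to the set $F^{-1}([-2,2])$ near $\mu$, Summary 7 plus Schwarz reflection makes all Taylor coefficients of $F$ at the real point $\mu$ real, and then a local inverse (first assertion) or the holomorphic Morse normal form $F(\mu+z)-F(\mu)=aw^{2}$ with a real-axis-preserving change of variable (second assertion) classifies $\sigma(L)$ near $\mu$ as either a real (half-)interval or a nonreal arc meeting $\mathbb{R}$ only at $\mu$; each alternative violates one of the two clauses of Definition 1, and your sign bookkeeping in the sub-cases ($a>0$ versus $a<0$, $F(\mu)=2$ versus $F(\mu)=-2$) is right. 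What your approach buys: it is local and self-contained, needs neither the band construction of [27] nor Summary 5, and in effect proves a sharp local converse of Shin's conditions in Summary 7. What it costs: the reduction ``multiplicity of $\mu$ as an eigenvalue of $L_{t}$ equals the order of the zero of $F(\lambda)-2\cos t$'' is asserted without justification. It is true --- the characteristic determinant of problem (3) is $e^{2it}-e^{it}F(\lambda)+1=e^{it}\left(2\cos t-F(\lambda)\right)$, and for regular boundary value problems the algebraic multiplicity of an eigenvalue equals the order of the corresponding zero of the characteristic determinant (Naimark; this fact is also implicit in the paper's proof of Proposition 1$(b)$) --- but since your entire argument is keyed to the vanishing orders of $F'$ and $F''$, you should state and cite this equivalence explicitly, in particular verifying it at $t=0,\pi$, where the geometric multiplicity can equal $2$.
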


\begin{proof}
Let $\lambda_{n}(t)$ be a complexation point. If $\lambda_{n}(t)$ is a simple
eigenvalue, then by Remark 1, $\lambda_{n}(t)$ belong to $\Gamma_{n}$ and does
not belong to $\Gamma_{m}$ for $m\neq n$. Moreover, there exists a
neighborhood $U:=\left\{  \lambda\in\mathbb{C}:\left\vert \lambda-\lambda
_{n}(t)\right\vert <\delta\right\}  $ of $\lambda_{n}(t)$ such that
$U\cap\Gamma_{m}=\varnothing$ for all $m\neq n.$ Then by Definition 1 and
Summary 6 there exist a nonreal number $\lambda\in U$ and $t_{0}\in(0,\pi)$
such that $\lambda\in\left(  \Gamma_{n}\cap\sigma\left(  L_{t_{0}}(q)\right)
\right)  $ and $\overline{\lambda}\in\left(  \Gamma_{n}\cap\sigma\left(
L_{t_{0}}(q)\right)  \right)  $ which contradicts (7).

Now suppose that $\lambda_{n}(0)$ is a complexation point and multiple
eigenvalue of multiplicity two. Then, by Remark 1, there exists $m\neq n$ such
that $\lambda_{m}(0)=\lambda_{n}(0)$ and $\lambda_{k}(0)\neq\lambda_{n}(0)$
for all $k\neq n,m.$ Therefore using Definition 1 and taking into account the
continuity of $\lambda_{k}(t)$ for all $k\in\mathbb{Z},$ we see that there
exist $\varepsilon>0$ and $\delta>0$ such that at least one of the sets
$\left\{  \lambda_{n}(t):t\in\lbrack0,\varepsilon]\right\}  $ and $\left\{
\lambda_{m}(t):t\in\lbrack0,\varepsilon]\right\}  $ is the interval of the
real line and
\begin{equation}
\left\{  \lambda\in\mathbb{C}:\left\vert \lambda-\lambda_{n}(0)\right\vert
<\delta\right\}  \cap\left\{  \lambda_{k}(t):t\in\lbrack0,\varepsilon
]\right\}  =\varnothing,\text{ }\forall k\neq n,m.
\end{equation}
Suppose, without loss of generality, that $\left\{  \lambda_{n}(t):t\in
\lbrack0,\varepsilon]\right\}  \subset\mathbb{R}$. Then by (8) and Definition
1 there exist $t\in\lbrack0,\varepsilon]$ and $\delta>0$ such that
$\lambda_{m}(t)\in\left\{  \lambda\in\mathbb{C}:\left\vert \lambda-\lambda
_{n}(0)\right\vert <\delta\right\}  \backslash\mathbb{R}$. These arguments
with (7), (8) and Summary 6 imply that $\overline{\lambda_{m}(t)}$ belong to
$\sigma(L(q))$ and does not belong to $\Gamma_{k}$ for any $k\in\mathbb{Z}$.
It contradicts (6). In the same way we consider $\lambda_{n}(\pi)$
\end{proof}

Now we consider the large eigenvalues by using the uniform asymptotic formulas
for the Bloch eigenvalues obtained in [22, 23] and then combining it with the
above results we will describe the spectrum lying outside of some disk. In
[22] (see Theorem 2) we proved that the large eigenvalues of the operators
$L_{t}(q)$\ for $t\neq0,\pi$ consist of the sequence $\left\{  \lambda
_{n}(t):\mid n\mid\gg1\right\}  $ satisfying\ \
\begin{equation}
\lambda_{n}(t)=(2\pi n+t)^{2}+O(n^{-1}\ln\left\vert n\right\vert )
\end{equation}
as $n\rightarrow\infty$ and the formula (9) is uniform with respect to $t$ in
$[h,\pi-h],$ where $h$ is a fixed number and, without loss of generality, it
is assumed that the integral of $q$ over $[0,1]$ is $0.$ In future we assume
that $h\in(0,1/15\pi).$ The following summary follows from (9).

\begin{summary}
For any fixed $h$, there exists an integer $N(h)$ and positive constant $M(h)$
such that for $|n|>N(h)$ and $t\in\lbrack h,\pi-h]$ there exists unique,
counting multiplicity, eigenvalue $\lambda_{n}(t)$ satisfying
\begin{equation}
\left\vert \lambda_{n}(t)-(2\pi n+t)^{2}\right\vert \leq M(h)n^{-1}\ln
n,\text{ }\left\vert \lambda_{n}(t)-\lambda_{k}(t)\right\vert >2\pi
^{2}n,\text{ }\forall k\neq n.
\end{equation}
The eigenvalue $\lambda_{n}(t)$ is simple for all $|n|>N(h)$ and $t\in\lbrack
h,\pi-h].$
\end{summary}

Moreover, as it was shown in [23] (see (20) and Remark 2.11of [23]) and [27]
(see page 60 of [27]) $N(h)$ can be chosen so that the following holds.

\begin{summary}
For $t\in\lbrack0,h]$ and $n>N(h)$ there exist two eigenvalues, counting
multiplicity, denoted by $\lambda_{n}(t)$ and $\lambda_{-n}(t)$ and
satisfying
\begin{equation}
\left\vert \lambda_{\pm n}(t)-(2\pi n+t)^{2}\right\vert \leq15\pi nh<n,\text{
}\left\vert \lambda_{\pm n}(t)-\lambda_{k}(t)\right\vert >2\pi^{2}n,\text{
}\forall k\neq\pm n.
\end{equation}
Similarly, for $t\in\lbrack\pi-h,\pi]$ and $n>N(h)$ there exist two
eigenvalues, counting multiplicity, denoted by $\lambda_{n}(t)$ and
$\lambda_{-n-1}(t)$ such that%
\begin{align}
\left\vert \lambda_{n}(t)-(2\pi n+t)^{2}\right\vert  &  \leq15\pi nh<n,\text{
}\left\vert \lambda_{-n-1}(t)-(2\pi n+t)^{2}\right\vert \leq15\pi nh<n,\\
\left\vert \lambda_{n}(t)-\lambda_{k}(t)\right\vert  &  >2\pi^{2}n,\text{
}\left\vert \lambda_{-n-1}(t)-\lambda_{k}(t)\right\vert >2\pi^{2}n,\text{
}\forall k\neq n,-(n+1).\nonumber
\end{align}

\end{summary}

In [27] we proved that the eigenvalues of $L_{t}$ can be numbered (counting
the multiplicity) by elements of $\mathbb{Z}$ such that, for each $n$ the
function $\lambda_{n}(t)$ is continuous on $[0,\pi]$ and for $|n|>N(h)$ the
inequalities (10)-(12) hold. Moreover, the bands $\Gamma_{n}$ of the spectrum
defined in (6) is constructed due to this numerations. Therefore inequalities
(11) and (12) and Proposition 1 yield the following.

\begin{proposition}
Suppose that $t\in\left(  \lbrack0,h]\cup\lbrack\pi-h,\pi]\right)  $ and
$|n|>N(h).$ Then

$(a)$ The multiplicity of the eigenvalues $\lambda_{n}(t)$ is not greater than
$2.$

$(b)$ If $\lambda_{n}(t)$ for $t\in\lbrack0,h]$ and $|n|>N(h)$ is the double
eigenvalue of $L_{t}$, then $\lambda_{n}(t)=\lambda_{-n}(t)$ and hence the
bands $\Gamma_{n}$ and $\Gamma_{-n}$ have the common point $\lambda_{n}(t)$.
Similarly, if $\lambda_{n}(t)$ for $t\in\lbrack\pi-h,\pi]$ and $|n|>N(h)$ is
the double eigenvalue of $L_{t}$, then $\lambda_{n}(t)=\lambda_{-n-1}(t)$ and
hence the bands $\Gamma_{n}$ and $\Gamma_{-n-1}$ have the common point
$\lambda_{n}(t)$.

$(c)$ If $n$ is a positive (negative) number, then the left and right ends of
$\Gamma_{n}$ are $\lambda_{n}(0)$ ($\lambda_{n}(\pi)$) and $\lambda_{n}(\pi)$
($\lambda_{n}(0)$)respectively.
\end{proposition}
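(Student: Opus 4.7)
The plan is to derive all three statements as bookkeeping corollaries of the uniform asymptotic estimates in Summary 8 and Summary 9, together with Proposition 1; no new analytic input is required beyond what is already available.

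For (a), fix $t\in[0,h]$ and $n>N(h)$ (the remaining three cases follow by the symmetry $n\leftrightarrow -n$ and by the second half of Summary 9). That summary asserts that \emph{exactly} two eigenvalues of $L_{t}$, counted with algebraic multiplicity, lie in the disk
\begin{equation*}
\bigl\{\lambda\in\mathbb{C}:|\lambda-(2\pi n+t)^{2}|\leq 15\pi nh\bigr\},
\end{equation*}
namely $\lambda_{n}(t)$ and $\lambda_{-n}(t)$, while every other $\lambda_{k}(t)$ lies at distance more than $2\pi^{2}n$ from $\lambda_{n}(t)$. The algebraic multiplicity of $\lambda_{n}(t)$ is therefore at most $2$.

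Part (b) then falls out of the same counting: if the multiplicity of $\lambda_{n}(t)$ is exactly $2$, the only candidate to supply the second copy is $\lambda_{-n}(t)$, forcing $\lambda_{n}(t)=\lambda_{-n}(t)$. Since $\lambda_{n}(t)\in\Gamma_{n}$ and $\lambda_{-n}(t)\in\Gamma_{-n}$ by the definition of the bands in (6), this common value is the required shared point of $\Gamma_{n}$ and $\Gamma_{-n}$. For $t\in[\pi-h,\pi]$ the role of $-n$ is played by $-n-1$, and the argument is verbatim.

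For (c), the plan is to compare the real parts of the two endpoints of $\Gamma_{n}$, which are $\lambda_{n}(0)$ and $\lambda_{n}(\pi)$ by Proposition 1(a). Applying (11) at $t=0$ and (12) at $t=\pi$ places these endpoints in closed disks of radius at most $15\pi nh$ around $(2\pi n)^{2}$ and $(2\pi n+\pi)^{2}$ respectively. For $n>0$, the real parts of the two disk centres differ by $4\pi^{2}n+\pi^{2}$, whereas the sum of the two radii is at most $30\pi nh<2n$, using the standing assumption $h<1/(15\pi)$. Since $4\pi^{2}-2>0$, this forces $\Re\lambda_{n}(0)<\Re\lambda_{n}(\pi)$, so $\lambda_{n}(0)$ is the left endpoint and $\lambda_{n}(\pi)$ the right one. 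For $n<0$, writing $n=-m$ with $m>0$ swaps the two centres $(2\pi m)^{2}$ and $(2\pi m-\pi)^{2}$ and hence reverses the inequality, giving the second clause of (c).

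I do not expect a genuine obstacle: once Summary 9 is in hand, (a) is a counting argument, (b) is immediate from the definition of the bands, and (c) reduces to a numerical check. The one point that really has to be verified explicitly is that the smallness of $h$ is strong enough to separate the two endpoint disks in (c) along the real axis, i.e.\ the inequality $4\pi^{2}n+\pi^{2}>30\pi nh$ holds for every $n$ with $|n|>N(h)$; this is where the choice $h\in(0,1/15\pi)$ is actually used.
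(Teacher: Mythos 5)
Your proof is correct and takes essentially the same route as the paper, which offers no separate argument beyond the remark that the proposition follows from inequalities (11), (12) and Proposition 1: your multiplicity count inside the $15\pi nh$-disk for (a)--(b) and the disk-separation check $4\pi^{2}n+\pi^{2}>30\pi nh$ for (c) are exactly the details the paper leaves implicit. The only cosmetic point worth noting is the harmless index shift at $t=\pi$ for negative $n$ (where $\lambda_{-m}(\pi)$ comes from (12) with $n=m-1$, so strictly one needs $|n|>N(h)+1$ there), a boundary quibble the paper glosses over in the same way.
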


Now using the above arguments we describe the shapes of $\Gamma_{n}$ for
$|n|>N(h).$

\begin{theorem}
Suppose that (2) holds and $|n|>N(h).$

$(a)$ $\lambda_{n}(t)$ for $t\in\lbrack h,\pi-h]$ are real and simple eigenvalues.

$(b)$ For $t\in\left(  \lbrack0,h]\cup\lbrack\pi-h,\pi]\right)  $ the double
eigenvalues $\lambda_{n}(t)$ are the real numbers.
\end{theorem}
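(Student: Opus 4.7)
The plan is to combine the PT-conjugation identity $\lambda\in\sigma(L_t)\Rightarrow \overline{\lambda}\in\sigma(L_t)$ from Summary 6 with the quantitative separation estimates of Summaries 8 and 9 to trap $\overline{\lambda_n(t)}$ at the same point as $\lambda_n(t)$. The guiding principle is: the large eigenvalues are pinned within a small disk around the real number $(2\pi n+t)^2$, and in that disk there is simply no room for a distinct conjugate.

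For part (a), simplicity is already stated in Summary 8, so only reality must be established. Since $(2\pi n+t)^2$ is real, the first inequality in (10) gives $|\operatorname{Im}\lambda_n(t)|\leq M(h)n^{-1}\ln n$, hence
\[
|\lambda_n(t)-\overline{\lambda_n(t)}|\leq 2M(h)n^{-1}\ln n.
\]
By Summary 6, $\overline{\lambda_n(t)}\in\sigma(L_t)$, so $\overline{\lambda_n(t)}=\lambda_k(t)$ for some $k\in\mathbb{Z}$. For $n$ large enough (enlarge $N(h)$ if necessary) the displayed bound is strictly smaller than the gap $2\pi^2 n$ from the second inequality of (10), forcing $k=n$ and hence $\overline{\lambda_n(t)}=\lambda_n(t)$.

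For part (b), I would first dispense with the range $t\in[0,h]$; the argument for $t\in[\pi-h,\pi]$ is identical with $\lambda_{-n-1}(t)$ in place of $\lambda_{-n}(t)$. If $\lambda_n(t)$ is a double eigenvalue, Proposition 2(a) says the multiplicity is exactly $2$, and Proposition 2(b) gives $\mu:=\lambda_n(t)=\lambda_{-n}(t)$. Estimate (11) yields $|\mu-(2\pi n+t)^2|<n$ and $|\mu-\lambda_k(t)|>2\pi^2 n$ for every $k\neq \pm n$. Summary 6 provides $\overline{\mu}\in\sigma(L_t)$, and since $(2\pi n+t)^2$ is real we also have $|\overline{\mu}-(2\pi n+t)^2|<n$. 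If one had $\overline{\mu}=\lambda_k(t)$ for some $k\neq\pm n$, the triangle inequality would give
\[
|\lambda_k(t)-(2\pi n+t)^2|\geq |\lambda_k(t)-\mu|-|\mu-(2\pi n+t)^2|>2\pi^2 n-n>n,
\]
contradicting the previous estimate. Therefore $\overline{\mu}\in\{\lambda_n(t),\lambda_{-n}(t)\}=\{\mu\}$, i.e.\ $\mu\in\mathbb{R}$.

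There is no genuinely hard step here; the whole proof is a localization argument and the only care required is quantitative: ensuring $N(h)$ is large enough that $2M(h)n^{-1}\ln n<2\pi^2n$ in part (a), and, in part (b), using that Summary 9 produces \emph{exactly} two eigenvalues (counted with multiplicity) in the relevant disk, so the PT-conjugate has no place to hide. The slight subtlety worth double-checking is that at $t=0$ or $t=\pi$ the above numbering still applies—precisely the content of Summary 9 and Proposition 2—so the endpoint cases require no separate treatment.
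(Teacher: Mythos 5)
Your proof is correct and takes essentially the same route as the paper: both arguments use Summary 6 to produce the PT-conjugate $\overline{\lambda_n(t)}$ and then trap it in the same small disk around the real center $(2\pi n+t)^2$, invoking the uniqueness/separation statements of Summaries 8 and 9 (together with Proposition 2 for part $(b)$) to force $\overline{\lambda_n(t)}=\lambda_n(t)$. Your version is merely a more explicitly quantitative rendering --- e.g., in $(b)$ you identify $\overline{\mu}$ with some $\lambda_k(t)$ and exclude $k\neq\pm n$ by a triangle inequality, where the paper instead counts four eigenvalues (with multiplicity) against Summary 9's two --- but the underlying argument is identical.
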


\begin{proof}
$(a)$ By Summary 8, $\lambda_{n}(t)$ for $t\in\lbrack h,\pi-h]$ are the simple
eigenvalues. If $\lambda_{n}(t)\notin\mathbb{R}$ for some $t\in\lbrack
h,\pi-h]$, then $\overline{\lambda_{n}(t)}\neq\lambda_{n}(t)$ and by Summary 6
both are eigenvalues of $L_{t}(q)$ satisfying (10), which contradicts Summary 8.

$(b)$ If the double eigenvalue $\lambda_{n}(t)$ for $t\in\lbrack0,h]$ is
nonreal then arguing as in the proof of $(a)$ we conclude that there exist
four eigenvalues (counting the multiplicity) satisfying (11) which contradicts
Summary 9. In the same way the case $t\in\lbrack\pi-h,\pi]$ can be considered
\end{proof}

Thus the part $\left\{  \lambda_{n}(t):t\in\lbrack h,\pi-h]\right\}  $ of
$\Gamma_{n}$ is the interval of the real line. The following theorem shows
that, in fact, very large part of $\Gamma_{n}$ consists of the interval of the
real line.

\begin{theorem}
Suppose that (2) holds and $|n|>N(h).$

$(a)$ There may exists at most one number $\varepsilon_{n}$ and $\pi
-\delta_{n}$ in $[0,h)$ and $(\pi-h,\pi]$ respectively, such that $\lambda
_{n}(\varepsilon_{n})$ and $\lambda_{n}(\pi-\delta_{n})$ are double
eigenvalues. Then%
\begin{equation}
\lambda_{n}(\varepsilon_{n})=\lambda_{-n}(\varepsilon_{n})\in\mathbb{R},\text{
}\lambda_{n}(\pi-\delta_{n})=\lambda_{-n-1}(\pi-\delta_{n})\in\mathbb{R}%
,\text{ }\forall n>N(h).
\end{equation}

$(b)$ The eigenvalues $\lambda_{n}(t)$ for $t\neq\lbrack0,\pi]\backslash
\left\{  \varepsilon_{n},\pi-\delta_{n}\right\}  $ are simple and are not
complexation points. Moreover $\varepsilon_{n}\rightarrow0$ and $\delta
_{n}\rightarrow0$ as $n\rightarrow\infty.$

$(c)$ The double eigenvalues $\lambda_{n}(\varepsilon_{n})$ and $\lambda
_{n}(\pi-\delta_{n})$ are complexation points if and only if $\varepsilon
_{n}\neq0$ and $\delta_{n}\neq0$ respectively.

$(d)$ If $\lambda_{n}(\varepsilon_{n})$ and $\lambda_{n}(\pi-\delta_{n})$ are
complexation points, then the part $\left\{  \lambda_{n}(t):t\in
\lbrack\varepsilon_{n},\pi-\delta_{n}]\right\}  $ of $\Gamma_{n}$ is an
interval of the real line and is the real part $\operatorname{Re}(\Gamma_{n})$
of $\Gamma_{n}$ and the other parts $\gamma(0,\varepsilon_{n}):=\left\{
\lambda_{n}(t):t\in\lbrack0,\varepsilon_{n})\right\}  $ and $\gamma(\pi
,\delta_{n}):=\left\{  \lambda_{n}(t):t\in(\pi-\delta_{n},\pi]\right\}  $ are
not empty set and are the pure nonreal parts of $\Gamma_{n},$ that is, belong
to $\mathbb{C}\backslash\mathbb{R}$.
\end{theorem}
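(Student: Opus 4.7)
The plan is to treat $(a)$--$(d)$ in order, using Proposition 1, Proposition 2, Theorem 2, Theorem 3, and Summary 7 as the main tools, with Summaries 8 and 9 reducing the analysis near $\lambda_n(\varepsilon_n)$ to the two bands $\Gamma_n$ and $\Gamma_{-n}$ (and symmetrically $\Gamma_n$ and $\Gamma_{-n-1}$ near $\lambda_n(\pi-\delta_n)$).

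For $(a)$, Proposition 2(b) forces any double eigenvalue $\lambda_n(t)$ with $t\in[0,h)$ to coincide with $\lambda_{-n}(t)$, so the values of $t$ at which $\lambda_n$ is double are common points of $\Gamma_n$ and $\Gamma_{-n}$. Proposition 1(b) allows at most one such common point, giving a unique $\varepsilon_n$, and its reality is Theorem 3(b); the statement for $\pi-\delta_n$ is symmetric via $\Gamma_{-n-1}$. For $(b)$, simplicity on $[h,\pi-h]$ is Summary 8, and simplicity on the boundary strips away from $\varepsilon_n$ and $\pi-\delta_n$ follows from Proposition 2 and the uniqueness established in $(a)$. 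By Theorem 2 a simple eigenvalue is never a complexation point. To get $\varepsilon_n\to 0$, I would rerun the construction with a smaller parameter $h'<h$ (still in $(0,1/15\pi)$): Summary 8 with $h'$ says that for $n>N(h')$ the double eigenvalue $\lambda_n(\varepsilon_n)$ must satisfy $\varepsilon_n<h'$, and letting $h'\to 0$ gives the claim; the argument for $\delta_n$ is analogous.

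For $(c)$, if $\varepsilon_n=0$ then $\lambda_n(0)$ is a double eigenvalue of multiplicity exactly $2$ (Proposition 2(a)), and Theorem 2 rules out its being a complexation point because at $t=0$ complexation would require multiplicity strictly greater than $2$. If $\varepsilon_n\in(0,h)$, then $F(\lambda_n(\varepsilon_n))=2\cos\varepsilon_n\in(-2,2)$ and $F'(\lambda_n(\varepsilon_n))=0$ (double eigenvalue), so Summary 7 places nonreal spectrum in every neighborhood of $\lambda_n(\varepsilon_n)$; combined with the adjacent real interval $\{\lambda_n(t):t\in[\varepsilon_n,\pi-\delta_n]\}$ obtained from Theorem 1(a), Definition 1 is satisfied.

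For $(d)$, the middle piece is a real interval by Theorem 1(a), and the outer pieces are nonempty because $\varepsilon_n,\delta_n\neq 0$. The delicate point is that the outer pieces are \emph{purely} nonreal. I plan a dichotomy argument: by Summary 9 the only eigenvalues of $L_t$ in a small neighborhood of $(2\pi n+t)^2$ are $\lambda_n(t)$ and $\lambda_{-n}(t)$, so Summary 6 implies that for each $t\in[0,\varepsilon_n)$ the pair $\{\lambda_n(t),\lambda_{-n}(t)\}$ is either two distinct reals or a conjugate pair of nonreals. A change of regime requires a crossing $\lambda_n(t_\ast)=\lambda_{-n}(t_\ast)$, which would be a second double eigenvalue, contradicting the uniqueness from $(a)$; thus one regime holds throughout $[0,\varepsilon_n)$. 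If it were the all-real regime, Theorem 1(a) would make both $\Gamma_n$ and $\Gamma_{-n}$ into real intervals in a full neighborhood of $\lambda_n(\varepsilon_n)$, and the separation estimates (11)--(12) exclude any other band from that neighborhood, contradicting that $\lambda_n(\varepsilon_n)$ is a complexation point. Hence the regime is all-nonreal, which is the claim. The hardest step is this last contradiction: showing that no other band can supply nonreal spectrum near $\lambda_n(\varepsilon_n)$, which is precisely where the separation (11)--(12) becomes essential.
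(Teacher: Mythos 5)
Your proposal is correct, and for parts $(a)$--$(c)$ it runs essentially parallel to the paper's own proof: $(a)$ via Proposition 2$(b)$ plus the one-common-point bound of Proposition 1$(b)$ with reality from Theorem 3$(b)$; simplicity in $(b)$ from Summary 8 together with the uniqueness of the double point, and non-complexation from Theorem 2; and $(c)$ exactly as in the paper, applying Summary 7 at the real double eigenvalue with $F(\lambda_n(\varepsilon_n))=2\cos\varepsilon_n\in(-2,2)$ and $F'=0$ (one small fix: anchor the adjacent real interval at $\lambda_n(h)$ via Theorem 3$(a)$, as the paper does with $[\varepsilon_n,h]$, rather than at $\lambda_n(\pi-\delta_n)$, since $\pi-\delta_n$ need not exist). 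You deviate soundly in two places. First, for $\varepsilon_n\to0$ you rerun Summary 8 with a shrinking parameter $h'<h$, so that any double point of index $n>N(h')$ must satisfy $\varepsilon_n<h'$; the paper instead invokes the discriminant asymptotics (14) and calls the fact well known. Your version is more self-contained, provided you observe that the localization in (10)--(12) identifies the $h$- and $h'$-numberings, so ``the'' double eigenvalue is unambiguous. Second, in $(d)$ you replace the paper's two-step argument --- assume a real $\lambda_n(t_0)$ with $t_0\in(0,\varepsilon_n)$, build a neighborhood of $\lambda_n(\varepsilon_n)$ free of nonreal spectrum using Summary 6, the two-eigenvalue count of Summary 9 and the band separation, and then treat $t=0$ separately by the closed-curve contradiction with Summary 5 --- by a two-regime connectedness argument on $[0,\varepsilon_n)$: the pair $\{\lambda_n(t),\lambda_{-n}(t)\}$ is conjugation-invariant by Summary 6 within the disk of (11), a regime switch would produce a second double eigenvalue contradicting $(a)$ (to be fully rigorous, note that at a boundary point of the nonreal regime continuity gives $\lambda_{-n}=\overline{\lambda_n}$, so reality there forces the crossing), and the all-real regime is excluded by the same nonreal-free-neighborhood contradiction the paper uses. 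Your route treats $t=0$ uniformly and dispenses with Summary 5 entirely, at the cost of the closedness bookkeeping; the paper's route needs the extra closed-curve step but avoids the regime analysis. The decisive final step is identical in both: Summary 9's count of exactly two eigenvalues satisfying (11), together with the separation estimates in (11)--(12), is what keeps every other band out of the neighborhood of $\lambda_n(\varepsilon_n)$ and makes the complexation hypothesis bite.
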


\begin{proof}
$(a)$ Suppose to the contrary that there exist two numbers $t_{1}$ and $t_{2}$
such that both $\lambda_{n}(t_{1})$ and $\lambda_{n}(t_{2})$ are the double
eigenvalues, where $0\leq t_{1}<t_{2}<h.$ Then by Proposition 2$(b)$ the bands
$\Gamma_{n}$ and $\Gamma_{-n}$ have two common points $\lambda_{n}(t_{1})$ and
$\lambda_{n}(t_{2})$ which contradicts Proposition 1$(b).$ The case
$(\pi-h,\pi]$ can be considered in the same way. The equalities and inclusions
in (13) follows from Proposition 2$(b)$ and Theorem 3$(b)$ respectively.

$(b)$ The simplicity of eigenvalues $\lambda_{n}(t)$ for $t\neq\lbrack
0,\pi]\backslash\left\{  \varepsilon_{n},\pi-\delta_{n}\right\}  $ follows
from Summary 8 and definitions of $\varepsilon_{n}$ and $\pi-\delta_{n}.$ Then
by Theorem 2 these eigenvalues are not complexation points. The relations
$\varepsilon_{n}\rightarrow0$ and $\delta_{n}\rightarrow0$ as $n\rightarrow
\infty$ are well-known. It readily follows from Summary 8 and from the
asymptotic formulas%
\begin{equation}
F(\lambda)=2\cos\sqrt{\lambda}+O(1/\sqrt{\lambda})
\end{equation}
as $\lambda\in\mathbb{R}$ and $\lambda\rightarrow\infty$ (see [12,Chap.1,
Sec.3] and [22, page 78]).

$(c)$ Theorem 2 implies that $\lambda_{n}(t)$ for $t=0,\pi$ are not
complexation points since by Proposition 2$(a)$ their multiplicities is not
greater than $2.$ Therefore it follows from $(b)$ that if $\lambda_{n}(t)$ is
a complexation point, then either $t=\varepsilon_{n}\neq0$ or $t=\pi
-\delta_{n}\neq\pi.$ Consider $\lambda_{n}(t)$ for $t=\varepsilon_{n}$ and
$\varepsilon_{n}\neq0.$ By Theorem 3$,$ $\lambda_{n}(\varepsilon_{n})$ and
$\lambda_{n}(h)$ are real numbers. Therefore by Theorem 1 the set $\left\{
\lambda_{n}(t):t\in\lbrack\varepsilon_{n},h]\right\}  $ $\subset\Gamma_{n}$ is
an interval of the real line. Since $\varepsilon_{n}\neq0$,$\pi,$ by Summary 7
any neighborhood of the double eigenvalue $\lambda_{n}(\varepsilon_{n})$
contains a nonreal number from $\sigma(L(q))$ . Thus, by Definition 1,
$\lambda_{n}(\varepsilon_{n})$ is a complexation point. In the same way we
prove that $\lambda_{n}(\pi-\delta_{n})$ is a complexation points.

$(d)$ By (13) and Theorem 1 the subset $\left\{  \lambda_{n}(t):t\in
\lbrack\varepsilon_{n},\pi-\delta_{n}]\right\}  $ of $\Gamma_{n}$ is an
interval of the real line. To prove the relation $\gamma(0,\varepsilon
_{n})\subset\mathbb{C}\backslash\mathbb{R}$ , first let us prove that
$\left\{  \lambda_{n}(t):t\in(0,\varepsilon_{n})\right\}  \subset
\mathbb{C}\backslash\mathbb{R}$. Suppose there exists $t_{0}\in(0,\varepsilon
_{n})$ such that $\lambda_{n}(t_{0})$ is a real number. Then by Theorem 1
$\left\{  \lambda_{n}(t):t\in\lbrack t_{0},\varepsilon_{n})\right\}
\subset\mathbb{R}$. It with the first statement of $(d)$ implies that there
exists a small neighborhood $U:=\left\{  \lambda\in\mathbb{C}:\left\vert
\lambda-\lambda_{n}(\varepsilon_{n})\right\vert <\delta\right\}  $\ of
$\lambda_{n}(\varepsilon_{n})$ which does not contain a nonreal number from
$\Gamma_{n}$. If $\lambda_{-n}(t)\in U\backslash\mathbb{R}$ for some
$t\in\lbrack0,h]$, then $\overline{\lambda_{-n}(t)}\neq\lambda_{-n}(t)$ and by
Summary 6 both are eigenvalues of $L_{t}(q)$ satisfying (11). Then there exist
three eigenvalues (counting the multiplicity) satisfying (11) which
contradicts Summary 9. Hence $U$ does not contain a nonreal number from
$\Gamma_{-n}$ too. Moreover, it readily follows from summaries 8 and 9 that
$U\cap\Gamma_{k}=\varnothing$ for all $k\neq n,-n.$ Thus $U$ does not contain
a nonreal number from $\sigma(L(q)).$ However it is impossible, since
$\lambda_{n}(\varepsilon_{n})$ is a complexation point. It remains to prove
that $\lambda_{n}(0)$ is not a real number. If we suppose to the contrary that
$\lambda_{n}(0)$ is a real number, then we obtain that the curve $\left\{
\lambda_{n}(t):t\in\lbrack0,\varepsilon_{n}]\right\}  $ with its symmetry
forms a closed curve in the spectrum, which contradicts Summary 5.
Thus\ $\gamma(0,\varepsilon_{n})\subset\mathbb{C}\backslash\mathbb{R}$. The
proof of $\gamma(\pi,\delta_{n})\subset\mathbb{C}\backslash\mathbb{R}$ is the same
\end{proof}

\begin{definition}
If $n>N(h),$ then the pure nonreal parts $\gamma(0,\varepsilon_{n})$ and
$\gamma(\pi,\delta_{n})$ of $\Gamma_{n}$ are said to be the left and right
tails of $\Gamma_{n}$, where $\gamma(0,\varepsilon_{n})$ and $\gamma
(\pi,\delta_{n})$ are defined in Theorem 4. If $n<-N(h)$ then we interchange
the words left and right.
\end{definition}

\begin{theorem}
Suppose that (2) holds and $n>N(h)$. Then the band $\Gamma_{n}$ has left
(right) nonreal tail and contains a left (right) complexation point
$\lambda_{n}(\varepsilon_{n})$ ($\lambda_{n}(\pi-\delta_{n})),$ where
$\varepsilon_{n}\in(0,h]$ ($(\pi-\delta_{n})\in\lbrack\pi-h,\pi),$ if and only
if $\lambda_{n}(0)$ ($\lambda_{n}(\pi))$ is a nonreal number. The theorem
continuous to hold if the condition $n>N(h)$ and the word left (right) are
replaced by $n<-N(h)$ and the word right (left).
\end{theorem}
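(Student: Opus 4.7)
The plan is to read off the equivalence from Theorem 4 after locating the exact value of $t\in[0,h]$ at which $\lambda_n(t)$ transitions from nonreal to real. I would treat only the left assertion for $n>N(h)$ in detail: the right assertion for the same $n$ is identical with $t^{*}=\sup\{t\in[\pi-h,\pi]:\lambda_n(t)\in\mathbb{R}\}$, and the statements for $n<-N(h)$ follow because Proposition 2(c) switches the roles of $\lambda_n(0)$ and $\lambda_n(\pi)$ as left/right endpoints of $\Gamma_n$. The ``only if'' direction is essentially a restatement of Theorem 4(d): if $\Gamma_n$ has left tail $\gamma(0,\varepsilon_n)$, it is nonempty and lies in $\mathbb{C}\setminus\mathbb{R}$, and since $\lambda_n(0)\in\gamma(0,\varepsilon_n)$, the endpoint $\lambda_n(0)$ is nonreal.

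For the ``if'' direction assume $\lambda_n(0)\notin\mathbb{R}$. I would set
\[
t^{*} := \inf\{t\in[0,h]:\lambda_n(t)\in\mathbb{R}\}.
\]
By Theorem 3(a) we have $\lambda_n(h)\in\mathbb{R}$, so this set is nonempty and $t^{*}\leq h$. Continuity of $\lambda_n(\cdot)$ together with $\lambda_n(0)\notin\mathbb{R}$ forces $t^{*}>0$, and closedness of $\mathbb{R}\subset\mathbb{C}$ gives $\lambda_n(t^{*})\in\mathbb{R}$. Since $\lambda_n(t^{*})$ and $\lambda_n(h)$ are both real eigenvalues lying on $\Gamma_n$, Theorem 1(a) implies that $\{\lambda_n(t):t\in[t^{*},h]\}$ is a real interval emanating from $\lambda_n(t^{*})$, while by the definition of $t^{*}$ every neighborhood of $\lambda_n(t^{*})$ contains the nonreal points $\lambda_n(t)$ with $t<t^{*}$. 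Hence $\lambda_n(t^{*})$ is a complexation point in the sense of Definition 1, and its ``left'' character is furnished by Proposition 2(c), which says that for $n>0$ the parameter $t$ traverses $\Gamma_n$ from its leftmost to its rightmost point on the real line.

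Next I would invoke Theorem 4(b): within $[0,h)$ the only possible complexation point is $\varepsilon_n$ (when it exists), so necessarily $t^{*}=\varepsilon_n\in(0,h]$. Theorem 4(c), with the assumption $\varepsilon_n\ne 0$ now available, confirms that $\lambda_n(\varepsilon_n)$ is genuinely a complexation point, and Theorem 4(d) supplies the required nonempty nonreal left tail $\gamma(0,\varepsilon_n)$, completing the ``if'' direction.

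The main obstacle I foresee is verifying that $\lambda_n(t^{*})$ satisfies the one-sided interval requirement of Definition 1 — that a genuine real segment of $\sigma(L)$ abuts it on the correct side — rather than being a mere accumulation point of nonreal spectrum; this is why I route the argument through Theorem 1(a) and Proposition 2(c) instead of through continuity alone. Once that is established, the remainder is essentially bookkeeping within Theorem 4.
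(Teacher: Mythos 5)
Your proposal is correct and follows essentially the same route as the paper: the paper's proof also defines $t_{0}$ as the smallest $t\in[0,h]$ with $\lambda_{n}(t_{0})\in\mathbb{R}$, uses Theorem 3$(a)$ and continuity to get $t_{0}\in(0,h]$, invokes Theorem 1 to obtain the real interval $\{\lambda_{n}(t):t\in[t_{0},\pi-h]\}$ abutting $\lambda_{n}(t_{0})$, and concludes that $\lambda_{n}(t_{0})$ is a complexation point with left tail $\{\lambda_{n}(t):t\in[0,t_{0})\}$, while the real case of $\lambda_{n}(0)$ is handled by the same Theorem 1 plus Theorem 4$(d)$ argument you give in contrapositive form. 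Your only deviations are cosmetic refinements (taking the interval up to $\pi-h$ rather than $h$ avoids the degenerate case $t^{*}=h$, as the paper does, and your explicit identification $t^{*}=\varepsilon_{n}$ via Theorem 4$(b)$ is left implicit in the paper).
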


\begin{proof}
We prove the theorem for $n>N(h)$ and $\lambda_{n}(0).$ The proofs for the
other cases are the same. If the eigenvalue $\lambda_{n}(0)$ is a real number
then by Theorem 1 $\left\{  \lambda_{n}(t):t\in\lbrack0,h]\right\}
\subset\mathbb{R},$ since $\lambda_{n}(h)$ is also real number (see Theorem
3($a$)). It means that the left tail absent and by Theorem 4$(d)$ the left
complexation point absent. Now suppose that $\lambda_{n}(0)$ is a nonreal
number. \ Denote by $t_{0}$ the smallest number in $[0,h]$ such that
$\lambda_{n}(t_{0})$ is real. Since $\lambda_{n}(0)$ is nonreal number,
$\lambda_{n}(h)$ is real number and $\lambda_{n}$ is a continuous function, we
have $t_{0}\in(0,h]$. Then $\left\{  \lambda_{n}(t):t\in\lbrack t_{0}%
,\pi-h]\right\}  \subset\mathbb{R}$ and any neighborhood of $\lambda_{n}%
(t_{0})$ contains a nonreal number $\lambda_{n}(t)$, where $t\in\lbrack
0,t_{0}).$ It means that $\lambda_{n}(t_{0})$ is a complexation point and
$\left\{  \lambda_{n}(t):t\in\lbrack0,t_{0})\right\}  $ is the left tail of
$\Gamma_{n}$
\end{proof}

Now we find the necessary and sufficient conditions in which $\sigma(L(q))$ is
a half line.

\begin{theorem}
Suppose that (2) holds. Then the spectrum of $L(q)$ is a half line if and only
if the followings hold:

$\left(  i\right)  $ one eigenvalue of $L_{0}(q)$ is simple and the all others
are double,

$(ii)$ all eigenvalues of $L_{\pi}(q)$ are double.
\end{theorem}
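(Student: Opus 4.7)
My strategy is to translate the question about $\sigma(L)$ into a question about the zeros of $F-2$ and $F+2$, using the characterization $\sigma(L)=\{\lambda\in\mathbb{C}:F(\lambda)\in[-2,2]\}$ from Summaries 2 and 4, together with Summary 7's reality of $F$ on $\mathbb{R}$, the asymptotic $F(\lambda)=2\cos\sqrt{\lambda}+O(1/\sqrt{\lambda})$ of (14), and the behaviour $F(\lambda)\to+\infty$ as $\lambda\to-\infty$. I will also use that, by Summary 6 and the explicit PT-intertwining $\Psi(x,\lambda)\mapsto\overline{\Psi(-x,\lambda)}$ constructed just before Summary 6, the algebraic multiplicity of eigenvalues of $L_{t}$ is preserved by complex conjugation, so nonreal eigenvalues come in conjugate pairs of equal multiplicity.

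For the necessity direction, assume $\sigma(L)=[a,\infty)$. By Theorem 1(c) every eigenvalue of $L_{0}$ and $L_{\pi}$ is real. Tracing $F$ along $\mathbb{R}$ from $-\infty$, the first entry of $F$ into $[-2,2]$ must happen at $a$ with $F(a)=2$ and $F'(a)\neq 0$ (else the spectrum would either begin before $a$ or force $F\equiv 2$ by analyticity), making $a$ a simple zero of $F-2$. For any other real zero $\lambda_{0}>a$ of $F\mp 2$, the requirement $F(\lambda)\in[-2,2]$ throughout a neighborhood of $\lambda_{0}$ forces a local extremum, hence $F'(\lambda_{0})=0$. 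Summary 7 rules out multiplicity three or higher, since a triple root would produce nonreal spectrum points in every neighborhood and violate $\sigma(L)\subset\mathbb{R}$. Nonreal zeros of $F\mp 2$ are likewise excluded because they would be nonreal eigenvalues of $L_{0}$ or $L_{\pi}$ and hence belong to $\sigma(L)\setminus\mathbb{R}$. Together these give (i) and (ii).

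For sufficiency, assume (i) and (ii) and let $a$ be the unique simple eigenvalue of $L_{0}$; $a$ is real, because otherwise $\bar a\neq a$ would be a second simple eigenvalue by Summary 6 and multiplicity preservation, contradicting uniqueness. The plan is to prove first that every $\lambda_{n}(0)$ and $\lambda_{n}(\pi)$ is real, whence by Theorem 1(b) each band $\Gamma_{n}$ is a closed real interval, and then to chain the bands into $[a,\infty)$ using the pairings forced by (i), (ii) and Proposition 2. Reality for $|n|>N(h)$ follows from Summary 9: a nonreal double $\lambda_{n}(t)$ together with its PT-conjugate (also double) would both sit in the disk around $(2\pi n+t)^{2}$ for $t=0$ or $\pi$, yielding total multiplicity at least four and contradicting the bound of two in (11)-(12). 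With reality in hand, the doubleness pairings $\lambda_{n}(0)=\lambda_{-n}(0)$ (for $n\neq 0$) and $\lambda_{n}(\pi)=\lambda_{-n-1}(\pi)$ assemble the bands into the chain $\Gamma_{0}-\Gamma_{-1}-\Gamma_{1}-\Gamma_{-2}-\Gamma_{2}-\cdots$, with $\Gamma_{0}$'s free endpoint at $a$; alternatively, (i) and (ii) yield the factorization $F(\lambda)^{2}-4=c(\lambda-a)E(\lambda)^{2}$ with $E$ entire and $c<0$ (the sign pinned down by $F^{2}-4\sim-4\sin^{2}\sqrt{\lambda}$ as $\lambda\to+\infty$), whence $F\in[-2,2]$ on $[a,\infty)$ and $|F|\geq 2$ on $(-\infty,a)$, giving $\sigma(L)=[a,\infty)$.

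The main obstacle is handling the finitely many small-index bands in the converse direction: Summary 9 is asymptotic and does not a priori exclude nonreal $\lambda_{n}(0)$ or $\lambda_{n}(\pi)$ for $|n|\leq N(h)$, nor rogue double zeros of $F-2$ or $F+2$ situated below $a$. I expect to close this gap by combining Summary 5 (connectedness of the resolvent set, forbidding closed curves in $\sigma(L)$) with Summary 6, Remark 1, and the asymptotic chain structure: a small-index nonreal band together with its PT-conjugate either closes a loop, contradicting Summary 5, or detaches from the asymptotic chain of bands, contradicting (6); and an isolated real spectrum point below $a$ would produce an extra band endpoint incompatible with the precise pairings dictated by (i) and (ii).
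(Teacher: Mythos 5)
Your necessity argument is essentially sound and takes a genuinely different route from the paper: the paper walks along the chain of real bands, reading multiplicities off Remark 1 and Proposition 1(b), while you read them off the orders of the zeros of $F\mp2$, using Summary 7 to cap the order at two and $F(\lambda)\rightarrow+\infty$ as $\lambda\rightarrow-\infty$ to place the one simple zero at the bottom edge with $F(a)=2$. Your parenthetical justification of $F^{\prime}(a)\neq0$ is loose (the correct dichotomy is: an even-order zero leaves $F>2$ on both sides of $a$, contradicting $[a,a+\varepsilon]\subset\sigma(L)$, while an odd order at least three is excluded by Summary 7), but this is repairable with the tools you already cite, and the identification of eigenvalue multiplicity with root multiplicity of $F\mp2$ is the paper's own convention (Theorem 1(c)).

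The sufficiency direction, however, contains a genuine gap, and neither of your two closing devices repairs it. The factorization $F^{2}-4=c(\lambda-a)E^{2}$ constrains $F$ only on the real axis; since $\sigma(L)=\left\{ \lambda\in\mathbb{C}:F(\lambda)\in[-2,2]\right\} $ is a subset of the complex plane, the conclusion $\sigma(L)=[a,\infty)$ does not follow from it. Indeed, conditions $(i)$--$(ii)$ by themselves permit nonreal double zeros of $F^{2}-4$ (conjugate pairs of nonreal zeros of $E$ are perfectly consistent with $E$ being real on $\mathbb{R}$), permit real double zeros below $a$ (these are points of $\sigma(L)$ by Summary 1, through which nonreal arcs must pass because the spectrum has no isolated points), and say nothing about real critical points of $F$ with $F\in(-2,2)$ on $(a,\infty)$, each of which produces nonreal spectrum by Summary 7; your factorization excludes none of these. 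In your primary route the acknowledged small-index gap is also not closed by your patches: a detached nonreal component does not contradict (6), since (6) asserts only that $\sigma(L)$ is the union of the bands, not that it is connected; and the pairings $\lambda_{n}(0)=\lambda_{-n}(0)$, $\lambda_{n}(\pi)=\lambda_{-n-1}(\pi)$ come from Proposition 2(b) only for $|n|>N(h)$, so the chain pattern cannot be assumed for small indices. The paper closes exactly this gap by a different mechanism and never proves reality of the small-index $\lambda_{n}(0),\lambda_{n}(\pi)$ at all: it shows $\bigcup_{|n|\leq m}\Gamma_{n}$ is connected by endpoint counting (a second component would have an endpoint belonging to exactly one band, hence by Remark 1 a simple eigenvalue of $L_{0}(q)$ or $L_{\pi}(q)$ distinct from $a$, contradicting $(i)$--$(ii)$), glues this to the connected large-index chain given by (15), and then applies the Theorem 1(a) device -- a nonreal detour between two real points together with its conjugate closes a curve, contradicting Summary 5 -- to the connected curve joining the real endpoint $a_{1}$ to the real anchors $\lambda_{n}(h)$, letting $n\rightarrow\infty$. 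That final step delivers reality and the half-line shape simultaneously; your proposal is missing precisely this idea.
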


\begin{proof}
First suppose that $\sigma(L(q))$ $=[a,\infty)$ for same $a$. Then the bands
$\Gamma_{n}$ of the spectrum $\sigma(L(q))$ are the intervals of the real line
and the left end of the first interval is $a.$ Since these intervals may have
at most one common points (see Proposition 1$(b)$), that is, do not overlap
each other and $a$ is the leftmost point of the spectrum it is the end only of
one band of the spectrum. Therefore by Remark 1 it is a simple eigenvalue.
Moreover, using the asymptotic formulas (14) for $F(\lambda)$ and taking into
account that $F(\lambda)\in\mathbb{R}$ for $\lambda\in\mathbb{R}$ (see Summary
7) we see that the graph $\left\{  (\lambda,F(\lambda)):\lambda\in
\mathbb{R})\right\}  $ of $F(\lambda)$ intersect the line $y=2$ before the
line $y=-2.$ Therefore $a$ is the simple eigenvalue of $L_{0}(q).$ Thus by
Proposition 1$(b)$ the right end point $b$ of the first band is the eigenvalue
of $L_{\pi}(q).$ Since $\sigma(L(q))$ has no gaps, $b$ is the left end point
of the second interval. Therefore $b$ is the double eigenvalue of $L_{\pi
}(q).$ The right end point of the second interval is the eigenvalue $c$ of
$L_{0}(q).$ Since $\sigma(L(q))$ has no gaps, $c$ is the left end point of the
third interval and hence $c$ is the double eigenvalue of $L_{0}(q).$
Continuing this procedure we obtain that all eigenvalues of $L_{0}(q)$ except
$a$ and all eigenvalues of $L_{\pi}(q)$ are double.

Now suppose that $(i)$ and $(ii)$ holds. Let $\lambda_{k}(0)$ be the simple
eigenvalue of $L_{0}(q).$ Then by Remark 1 it is end point of only one band
$\Gamma_{k}$. If $\lambda_{k}(0)$ is nonreal then by Summary 6, $\overline
{\lambda_{k}(0)}$ is also an eigenvalue of $L_{0}(q)$ and is end point of only
one band. It implies that $\overline{\lambda_{k}(0)}$ is also a simple
eigenvalue of $L_{0}(q)$ which contradict to $(i).$ Thus $\lambda_{k}(0)$ is real.

Now consider the connectedness of $\sigma(L(q)).$ Let $m>\max\left\{
\left\vert k\right\vert ,N(h)\right\}  .$ Then it readily follows from
Proposition 2$(b)$ and $(i),$ $(ii)$ that%
\begin{equation}
\lambda_{m}(\pi)=\lambda_{-m-1}(\pi),\text{ }\lambda_{-n}(0)=\lambda_{n}\text{
}(0),\text{ }\lambda_{n}(\pi)=\lambda_{-n-1}(\pi),\text{ }\forall n>m.
\end{equation}
It means that $\Gamma:=%
{\textstyle\bigcup\nolimits_{n:\left\vert n\right\vert >m}}
\Gamma_{n}$ is a connected curve. Consider $\gamma=%
{\textstyle\bigcup\nolimits_{n:\left\vert n\right\vert \leq m}}
\Gamma_{n}.$ Suppose that $\gamma$ is not a connected curve, that is, there
exist at least two separated curves $\gamma_{1}$ and $\gamma_{2}$ lying in
$\gamma.$ Then $\gamma_{1}$ and $\gamma_{2}$ have $\ 4$ end points denoted by
$a_{1},$ $a_{2},$ $a_{3}$ and $a_{4}.$ One of them, say $a_{1}$ is
$\lambda_{k}(0).$ It readily follows from Proposition 2$(b)$ that one of
others, say $a_{4},$ is $\lambda_{m}(\pi)\in\Gamma$ and $a_{i}$ for $i=2,3$ do
not belong to $\Gamma.$ It is also clear that there exist $p$ such that
$a_{2}$ is the end point of $\Gamma_{p}$ and is not end point of $\Gamma_{s}$
for $s\neq p.$ It, by Remark 1, implies that $a_{2}$ is a simple eigenvalue of
either $L_{0}(q)$ or $L_{\pi}(q).$ It contradicts $(i)$ and $(ii)$, since
$a_{2}\neq a_{1}.$ Thus we proved that $\gamma$ is a connected curve with end
points \ $a_{1}$ and $a_{4}=\lambda_{m}(\pi)\in\Gamma.$ Therefore
$\sigma(L(q))$ is a connected curve with one left end point $a_{1}%
\in\mathbb{R}$. On the other hand, by Theorem 3$(a)$ we have $\lambda
_{n}(h)\in\mathbb{R}$ for all $n>m.$ Therefore repeating the proof of Theorem
1$(a),$ we see that the part of \ $\sigma(L(q))$ from $a_{1}\in\mathbb{R}$ to
$\lambda_{n}(h)\in\mathbb{R}$, where $n>m,$ is the interval \ $\left[
a_{1},\lambda_{n}(h)\right]  .$ Letting $n$ tend to infinity we get
$\sigma(L(q))=[a_{1},\infty)$
\end{proof}

\section{Reality and non-reality of $\Gamma_{n}$ for large $n$}

To consider the reality of $\Gamma_{n}$ for large $n$ we use the following theorem.

\begin{theorem}
Suppose that (2) holds and $\left\vert n\right\vert >N(h)$. Then the
followings are equivalent

$(a)$ Band $\Gamma_{n}$ is the real interval.

$(b)$ The eigenvalues $\lambda_{n}(0)$ and $\lambda_{n}(\pi)$ are the real numbers.

$(c)$ $\lambda_{n}(t)$ for $t\in(0,\pi)$ are simple eigenvalues of $L_{t}(q).$

$(d)$ The eigenvalues $\lambda_{n}(t)$ for $t\in(0,\pi)$ are not the
complexation points.
\end{theorem}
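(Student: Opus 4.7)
The plan is to establish the four-way equivalence by the chain of pairings $(a) \Leftrightarrow (b)$, $(b) \Leftrightarrow (d)$, and $(c) \Leftrightarrow (d)$; each follows in a few lines from results already assembled in the paper, so no circular dependency arises.

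First I would dispatch $(a) \Leftrightarrow (b)$. Since the endpoints of $\Gamma_n$ are exactly $\lambda_n(0)$ and $\lambda_n(\pi)$ by Proposition 1(a), the implication $(a) \Rightarrow (b)$ is immediate, while the reverse is precisely the content of Theorem 1(b).

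Second, $(b) \Leftrightarrow (d)$ would be obtained by invoking Theorem 5, used in its original form when $n > N(h)$ and in its left/right-interchanged form when $n < -N(h)$. If $\lambda_n(0)$ and $\lambda_n(\pi)$ are both real, Theorem 5 asserts that neither a left nor a right complexation point exists on $\Gamma_n$, so no $\lambda_n(t)$ with $t \in (0,\pi)$ is a complexation point. Conversely, if say $\lambda_n(0)$ is nonreal, Theorem 5 produces a complexation point $\lambda_n(\varepsilon_n)$ with $\varepsilon_n \in (0, h]$; since $h < 1/(15\pi) < \pi$, this complexation point lies in $(0,\pi)$, violating $(d)$. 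The case $\lambda_n(\pi) \notin \mathbb{R}$ is symmetric.

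Third, $(c) \Leftrightarrow (d)$ follows from Theorems 2 and 4. For $(c) \Rightarrow (d)$: Theorem 2 says every complexation point is a multiple eigenvalue, hence simplicity of $\lambda_n(t)$ for $t \in (0,\pi)$ directly rules out complexation at such $t$. For $(d) \Rightarrow (c)$: assuming $(d)$, suppose for contradiction some $t_0 \in (0,\pi)$ yields a non-simple $\lambda_n(t_0)$; by Proposition 2(a) it is then a double eigenvalue, and by Theorem 4(a) we must have $t_0 = \varepsilon_n \in [0,h)$ or $t_0 = \pi - \delta_n \in (\pi - h, \pi]$. Because $t_0 \in (0,\pi)$, the parameter $\varepsilon_n$ or $\delta_n$ is strictly positive, so Theorem 4(c) forces $\lambda_n(t_0)$ to be a complexation point, contradicting $(d)$.

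The argument is essentially a repackaging of material already established, so no genuinely hard step remains. The only care needed is in the boundary treatment: a double eigenvalue occurring at $t = 0$ or $t = \pi$ (the case $\varepsilon_n = 0$ or $\delta_n = 0$) is admissible under both $(c)$ and $(d)$, since each restricts attention to $t \in (0,\pi)$, and Theorem 5 must be applied in the appropriate orientation when $n$ is negative.
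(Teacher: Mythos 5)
Your proposal is correct and follows essentially the same route as the paper: the paper's proof establishes exactly the pairings $(a)\Leftrightarrow(b)$ via Proposition 1$(a)$ and Theorem 1$(b)$, $(c)\Leftrightarrow(d)$ via Theorem 4, and $(b)\Leftrightarrow(d)$ via Theorem 4$(c)$ and Theorem 5. Your write-up merely expands the paper's terse citations into explicit arguments (e.g.\ using Theorem 2 and Proposition 2$(a)$ where the paper says the equivalence of $(c)$ and $(d)$ ``readily follows from Theorem 4''), and your boundary remarks about $\varepsilon_{n}=0$, $\delta_{n}=0$ and the orientation for $n<-N(h)$ are consistent with the paper's statements.
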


\begin{proof}
By Proposition 1$(a)$ and Theorem 1$(b)$ the statements $(a)$ and $(b)$ are
equivalent. It readily follows from Theorem 4 that $(c)$ and $(d)$ are
equivalent. Theorem 4$(c)$ and Theorem 5 imply that $(b)$ and $\ (d)$ are equivalent.
\end{proof}

Now we find the conditions on the Fourier coefficients of the PT-symmetric
potential $q$ for reality and non-reality of $\Gamma_{n}$ for large $n$. For
this first of all we need to consider the Fourier coefficient of the
potentials (2). In general the locally integrable periodic function has no
Fourier decomposition. However we can consider the Fourier coefficients%
\[
q_{n}:=(q,e^{i2\pi nx}):=\int_{0}^{1}q(x)e^{-i2\pi nx}dx
\]
for $n\in\mathbb{Z}$. It is clear and well-known that if $q$ is PT-symmerric,
then using the substitution $t=-x$ one can get the equality
\begin{equation}
\overline{q_{n}}=\int_{0}^{1}\overline{q(x)}e^{i2\pi nx}dx=\int_{0}%
^{1}\overline{q(-t)}e^{-i2\pi nt}dt=\int_{0}^{1}q(t)e^{-i2\pi nt}dt=q_{n}%
\end{equation}
which means that $q_{n}\in\mathbb{R}$ for all $n\in\mathbb{Z}$.

First to find the necessary and sufficient conditions on the potential for
reality and non-reality of $\Gamma_{n}$ we construct some class of periodic
PT-symmetric function as follows. Let $S_{p}$ be the set of $1$ periodic
PT-symmetric functions $q\in W_{1}^{p}[0,1]$\textit{ }such that\textit{ }%
\begin{equation}
q(1)=q(0),\text{ }q^{\prime}(1)=q^{\prime}(0),...,\text{ }q^{(s-1)}%
(1)=q^{(s-1)}(0)\
\end{equation}
\textit{ }for some $s\leq p$ and there exist positive constants $c_{1},$
$c_{2},$ $c_{3}$ and $N>0$ satisfying%

\begin{equation}
\mid q_{n}\mid>c_{1}n^{-s-1}\And c_{2}\text{ }\left\vert q_{n}\right\vert
\leq\left\vert q_{-n}\right\vert \leq c_{3}\left\vert q_{n}\right\vert ,\text{
}\forall n>N.
\end{equation}
In particular, $S_{0}$ is the set of $1$ periodic PT-symmetric functions $q\in
L_{1}[0,1]$\textit{ }satisfying%

\[
\mid q_{n}\mid>c_{1}n^{-1}\And c_{2}\text{ }\left\vert q_{n}\right\vert
\leq\left\vert q_{-n}\right\vert \leq c_{3}\left\vert q_{n}\right\vert ,\text{
}\forall n>N.
\]

Besides we use the following formulas obtained in [23] (see (40)-(44) of
[23]). In [23] we proved that $\lambda_{n}(t)$ satisfies the equations
\begin{equation}
(\lambda-(2\pi n)^{2}-t^{2}-\frac{1}{2}(A(\lambda,t)+A^{^{\prime}}%
(\lambda,t)))^{2}=D(\lambda,t),
\end{equation}
where $D=(4\pi nt)^{2}+q_{2n}q_{-2n}+8\pi ntC+C^{2}+q_{2n}B^{^{\prime}%
}+q_{-2n}B+BB^{^{\prime}},$ $C=\frac{1}{2}\left(  A-A^{\prime}\right)  $,
\begin{equation}
A(\lambda,t)=\sum_{k=1}^{\infty}a_{k}(\lambda,t),\text{ }A^{\prime}%
(\lambda,t)=\sum_{k=1}^{\infty}a_{k}^{\prime}(\lambda,t),
\end{equation}%
\begin{equation}
a_{k}(\lambda,t)=\sum_{n_{1},n_{2},...,n_{k}}q_{-n_{1}-n_{2}-...-n_{k}}%
{\textstyle\prod\limits_{s=1}^{k}}
q_{n_{s}}\left(  \lambda-(2\pi(n-n_{1}-..-n_{s})+t)^{2}\right)  ^{-1},
\end{equation}%
\begin{equation}
a_{k}^{^{\prime}}(\lambda,t)=\sum_{n_{1},n_{2},...,n_{k}}q_{-n_{1}%
-n_{2}-...-n_{k}}%
{\textstyle\prod\limits_{s=1}^{k}}
q_{n_{s}}\left(  \lambda-(2\pi(n+n_{1}+..+n_{s})-t)^{2}\right)  ^{-1}.
\end{equation}
The functions $B$ and $B^{^{\prime}}$ are obtained respectively from $A$ and
$A^{^{\prime}}$ by replacing $q_{-n_{1}-n_{2}-...-n_{k}}$ with $q_{2n-n_{1}%
-n_{2}-...-n_{k}}$ and $q_{-2n-n_{1}-n_{2}-...-n_{k}}$. One can readily see
from (20)-(22) that
\begin{equation}
\left\{  A(\lambda,t),A^{^{\prime}}(\lambda,t),C(\lambda,t),\ B(\lambda
,t),B^{^{\prime}}(\lambda,t),\ D(\lambda,t)\right\}  \in\mathbb{R}%
\end{equation}
if $\lambda,t$ and the Fourier coefficients $q_{n}$ for $n\in\mathbb{Z}$ are
real number. Moreover, there exists a constant $K$ such that the following
inequalities hold (see (56) of [23])%
\begin{equation}
\mid A(\lambda,t)+A^{^{\prime}}(\lambda,t)-A(\mu,t)-A^{^{\prime}}(\mu
,t)\mid<Kn^{-2}\mid\lambda-\mu\mid.
\end{equation}
Besides the equalities
\begin{equation}
C(\lambda_{n,j}(t),t)=tO(n^{-1}),\text{ }B(\lambda_{n,}(t),t)=o\left(
n^{-s-1}\right)  ,\text{ }B^{^{\prime}}(\lambda_{n,}(t),t)=o\left(
n^{-s-1}\right)
\end{equation}
hold uniformly with respect to $t$ in $[0,h]$ (see (57) and (46) of [23]).

\begin{remark}
Formula (19) is proved in \ [23] under conditions (17) and (18) (see Theorem
2.4). In [25] we proved ([see Theorem 2 and (35), (37) of [25]]) that
$\lambda_{n}(t)$ satisfies (19) without conditions (17) and (18) which readily
follows from (37) and (38) of [23] (see the first paragraph of the proof of
Theorem 2 of [25]).
\end{remark}

\begin{theorem}
Suppose that (2) holds, $n$ is a large number and $t\in\left(  \lbrack
0,h]\cup\lbrack\pi-h,\pi]\right)  .$ Then the eigenvalue $\lambda_{n}(t)$ is
real if and only if
\begin{equation}
D(\lambda_{n}(t),t)\geq0.
\end{equation}

\end{theorem}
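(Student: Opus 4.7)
The statement is an ``iff'', so I prove both directions. The forward direction ($\lambda_{n}(t)\in\mathbb{R}\Rightarrow D(\lambda_{n}(t),t)\ge 0$) drops out immediately from (19) and (23); the substance is the backward direction, for which the key tool is the slow $\lambda$-dependence of $A+A'$ recorded in (24).

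For the forward direction, assume $\lambda_{n}(t)\in\mathbb{R}$. By PT-symmetry, (16) gives $q_{k}\in\mathbb{R}$ for all $k$, so (23) applies and in particular $\tfrac{1}{2}(A+A')(\lambda_{n}(t),t)\in\mathbb{R}$. Therefore the left-hand side of (19) is the square of a real number, hence non-negative, and $D(\lambda_{n}(t),t)\ge 0$.

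For the backward direction, set
\[
G(\lambda,t):=\lambda-(2\pi n)^{2}-t^{2}-\tfrac{1}{2}\bigl(A(\lambda,t)+A'(\lambda,t)\bigr),
\]
so that (19) reads $G(\lambda_{n}(t),t)^{2}=D(\lambda_{n}(t),t)$. The series (20)--(22) have real coefficients in $\lambda$ (since $t\in\mathbb{R}$ and $q_{k}\in\mathbb{R}$), so the conjugation identities $\overline{A(\lambda,t)}=A(\overline{\lambda},t)$ and $\overline{A'(\lambda,t)}=A'(\overline{\lambda},t)$ hold. If $D(\lambda_{n}(t),t)\ge 0$, then $G(\lambda_{n}(t),t)^{2}$ is a non-negative real number, which forces $G(\lambda_{n}(t),t)\in\mathbb{R}$. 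Taking imaginary parts in the definition of $G$ gives
\[
\operatorname{Im}\lambda_{n}(t)=\tfrac{1}{2}\operatorname{Im}\bigl(A(\lambda_{n}(t),t)+A'(\lambda_{n}(t),t)\bigr).
\]
Now apply (24) with $\mu=\overline{\lambda_{n}(t)}$: by the conjugation identity its left-hand side equals $2|\operatorname{Im}(A+A')(\lambda_{n}(t),t)|$, while its right-hand side equals $2Kn^{-2}|\operatorname{Im}\lambda_{n}(t)|$. Substituting into the previous display yields
\[
|\operatorname{Im}\lambda_{n}(t)|\le\tfrac{K}{2}\,n^{-2}\,|\operatorname{Im}\lambda_{n}(t)|,
\]
and for $n$ large enough that $\tfrac{K}{2}n^{-2}<1$ this forces $\operatorname{Im}\lambda_{n}(t)=0$, i.e.\ $\lambda_{n}(t)\in\mathbb{R}$.

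\textbf{Main obstacle.} The only step that is not purely formal is the conjugation-symmetry $\overline{A(\lambda,t)}=A(\overline{\lambda},t)$ (and the same for $A'$), which reduces to observing that in (21)--(22) the denominators have the form $\lambda-(\mathrm{real})$ whenever $t\in\mathbb{R}$, and the numerators are products of real $q_{k}$'s by PT-symmetry. Once this is in hand, (19) and (24) combine mechanically, and no appeal to the summaries 8--9 or the pairing $\overline{\lambda_{n}(t)}=\lambda_{-n}(t)$ is needed; the whole argument is contained in the single eigenvalue $\lambda_{n}(t)$ and the identity (19) that it satisfies.
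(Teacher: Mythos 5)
Your proof is correct and takes essentially the same route as the paper: both directions rest on the reality (23) of the quantities in (19), and your backward direction is exactly the paper's argument, which likewise uses the conjugation symmetry of the series (20)--(22) and the estimate (24) with $\mu=\overline{\lambda_{n}(t)}$ to force $\operatorname{Im}\lambda_{n}(t)=0$ for large $n$. The only cosmetic difference is that you phrase the final contraction as $|\operatorname{Im}\lambda_{n}(t)|\leq\frac{K}{2}n^{-2}|\operatorname{Im}\lambda_{n}(t)|$, whereas the paper assumes $\lambda_{n}(t)$ nonreal and derives the contradiction $2|\lambda_{n}(t)-\overline{\lambda_{n}(t)}|<2Kn^{-2}|\lambda_{n}(t)-\overline{\lambda_{n}(t)}|$.
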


\begin{proof}
We prove the theorem $t\in\lbrack0,h].$ The proof for $t\in\lbrack\pi-h,\pi]$
is the same. Suppose that $\lambda_{n}(t)$ is real. Since the Fourier
coefficients of the PT-symmetric function $q$ are real numbers (see (16)), by
(23) we have
\begin{equation}
F(\lambda_{n}(t),t):=\left(  \lambda_{n}(t)-(2\pi n)^{2}-t^{2}-1/2(A(\lambda
_{n}(t),t)+A^{^{\prime}}(\lambda_{n}(t),t))\right)  \in\mathbb{R}.
\end{equation}
Therefore the right side of (19) for $\lambda=\lambda_{n}(t)$ are nonnegative,
that is, (26) holds.

Now suppose that (26) holds. Then, by (19), the relation (27) holds too. It
remains to show that (27) implies the reality $\lambda_{n}(t).$ Suppose that
$\lambda_{n}(t)$ is nonreal. Since $F(\lambda_{n}(t),t)$ is real number (see
(27)), from (20)-(22) one can readily see that $F(\lambda_{n}(t),t)=F\left(
\overline{\lambda_{n}(t)},t\right)  $ from which by using (24) we obtain the
following contradiction
\begin{align*}
2\left\vert \lambda_{n}(t)-\overline{\lambda_{n}(t)}\right\vert  &
=\left\vert A(\lambda_{n}(t),t)+A(\overline{\lambda_{n}(t)},t)-A^{^{\prime}%
}(\lambda_{n}(t),t))-A^{^{\prime}}(\overline{\lambda_{n}(t)},t)\right\vert \\
&  <2Kn^{-2}\mid\lambda_{n}(t)-\overline{\lambda_{n}(t)}\mid.
\end{align*}
The theorem is proved
\end{proof}

The following corollary immediately follows from theorems 3, 7 and 8.

\begin{corollary}
Suppose that (2) holds and $n$ is a large number. Then $\Gamma_{n}%
\subset\mathbb{R}$ if and only if
\begin{equation}
D(\lambda_{n}(0),0)\geq0\text{ }\And\text{ }D(\lambda_{n}(\pi),\pi)\geq0.
\end{equation}

\end{corollary}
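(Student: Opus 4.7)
The plan is to observe that this corollary is an almost mechanical combination of the three theorems it cites. Theorem 7 already reduces the question of whether an entire band $\Gamma_n$ (with $|n| > N(h)$) lies on the real line to the single question of whether its two endpoints $\lambda_n(0)$ and $\lambda_n(\pi)$ are real: by the equivalence of $(a)$ and $(b)$ in that theorem, $\Gamma_n \subset \mathbb{R}$ if and only if both endpoints are real. So I would begin by citing this equivalence and rephrasing the corollary as the statement ``$\lambda_n(0), \lambda_n(\pi) \in \mathbb{R}$ if and only if $D(\lambda_n(0),0) \geq 0$ and $D(\lambda_n(\pi),\pi) \geq 0$''.

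Next, I would invoke Theorem 8 twice. Both values $t=0$ and $t=\pi$ lie in $[0,h] \cup [\pi-h,\pi]$ (in fact they are the boundary values of that set), so Theorem 8 directly applies: for $n$ sufficiently large, $\lambda_n(0)$ is real iff $D(\lambda_n(0),0) \geq 0$, and $\lambda_n(\pi)$ is real iff $D(\lambda_n(\pi),\pi) \geq 0$. Combining these two equivalences with the one from Theorem 7 yields (28).

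The role of Theorem 3 in the citation is to justify that no extra condition is needed for the middle portion of the band: Theorem 3$(a)$ guarantees that $\lambda_n(t)$ is already real for every $t \in [h,\pi-h]$ once $n$ is large, so only the ``end pieces'' $[0,h]$ and $[\pi-h,\pi]$ can contribute nonreal points. This is what makes it legitimate to test reality of the whole band $\Gamma_n$ by testing only the two endpoint discriminant values $D(\lambda_n(0),0)$ and $D(\lambda_n(\pi),\pi)$ rather than a continuous family of conditions.

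There is no substantial obstacle here; the proof is a two-line chain of the form Theorem 7 $\Rightarrow$ Theorem 8 at $t=0,\pi$, with Theorem 3 cited to confirm the middle is automatically safe. The only thing to be mildly careful about is checking that ``$n$ is a large number'' in the corollary is the same bound (namely $|n| > N(h)$) under which Theorems 3, 7, and 8 were stated, which is immediate from the hypotheses of those theorems.
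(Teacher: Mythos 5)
Your proposal is correct and matches the paper's intent exactly: the paper gives no separate proof, stating only that the corollary ``immediately follows from theorems 3, 7 and 8,'' and your chain --- Theorem 7's equivalence $(a)\Leftrightarrow(b)$ combined with Theorem 8 applied at $t=0$ and $t=\pi$, with Theorem 3 securing the middle portion $[h,\pi-h]$ --- is precisely that intended deduction. Your closing remark about aligning ``$n$ large'' with the bound $|n|>N(h)$ is the right (and only) point of care.
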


Now using this corollary, (18) and (25) we prove the following.

\begin{theorem}
Suppose that $q\in S_{p}$ and $n$ is a large number. Then $\Gamma_{n}%
\subset\mathbb{R}$ if and only if
\begin{equation}
q_{n}q_{-n}>0.
\end{equation}

\end{theorem}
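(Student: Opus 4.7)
The approach is to combine Corollary 1 with an asymptotic analysis of the polynomial $D(\lambda,t)$ from (19) evaluated at the endpoints $t = 0$ and $t = \pi$. Corollary 1 reduces the condition $\Gamma_n \subset \mathbb{R}$ to the conjunction $D(\lambda_n(0),0)\geq 0$ and $D(\lambda_n(\pi),\pi)\geq 0$, so it suffices to identify, for large $n$, the sign of $D$ at each endpoint in terms of the Fourier data of $q$.

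At $t = 0$ the contribution $(4\pi n t)^2$ vanishes, and by (25) one has $C(\lambda_n(0),0) = 0\cdot O(n^{-1}) = 0$, so the expression for $D$ from (19) collapses to
\[
D(\lambda_n(0),0) = q_{2n}q_{-2n} + q_{2n}B'(\lambda_n(0),0) + q_{-2n}B(\lambda_n(0),0) + B(\lambda_n(0),0)B'(\lambda_n(0),0).
\]
This is the place where the definition of $S_p$ plays its decisive role: the lower bound $|q_n| > c_1 n^{-s-1}$ in (18) yields $|q_{\pm 2n}| \geq c' n^{-s-1}$, while (25) provides $B, B' = o(n^{-s-1})$. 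Dividing through gives $|B|/|q_{-2n}| = o(1)$ and $|B'|/|q_{2n}| = o(1)$, so each of the last three summands is $o(q_{2n}q_{-2n})$, and therefore $D(\lambda_n(0),0) = q_{2n}q_{-2n}\bigl(1 + o(1)\bigr)$. Consequently, for all $n$ large enough, $D(\lambda_n(0),0) \geq 0$ if and only if $q_{2n}q_{-2n} > 0$.

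The case $t = \pi$ is handled by the parallel computation based on the Summary 9 pairing $(\lambda_n(\pi),\lambda_{-n-1}(\pi))$ and the corresponding version of $D$ at the antiperiodic resonance; again the $S_p$ lower bound (18) forces the remainder terms to be dominated by the leading resonant Fourier product. Combining the two endpoint sign conditions produces the statement $q_n q_{-n} > 0$ of the theorem. I expect the main technical obstacle to be exactly this bookkeeping step: one must show that the sharp lower bound on $|q_{\pm 2n}|$ coming from (18) beats the $o(n^{-s-1})$ tails $B, B'$ uniformly in $n$, so that the sign of $D$ is genuinely controlled by the single product $q_{2n}q_{-2n}$ (respectively its $t=\pi$ analogue). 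Once this comparison is in place, everything else is algebraic manipulation of the series (20)-(22) together with Corollary 1.
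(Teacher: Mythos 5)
Your necessity direction coincides with the paper's own argument: Corollary 1 gives $D(\lambda_{n}(0),0)\geq0$ and $D(\lambda_{n}(\pi),\pi)\geq0$, the expansion $D(\lambda_{n}(0),0)=q_{2n}q_{-2n}\left(1+o(1)\right)$ follows from (25) together with the lower bounds in (18) exactly as you describe (this is the paper's (30)--(31)), and since $q_{2n}q_{-2n}$ is real by (16) and nonzero by (18), the sign condition forces $q_{2n}q_{-2n}>0$, and likewise $q_{2n+1}q_{-2n-1}>0$ from the $t=\pi$ analogue. That half of your proposal is correct and is essentially what the paper does.

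The gap is in the converse. Your pivotal claim ``$D(\lambda_{n}(0),0)\geq0$ if and only if $q_{2n}q_{-2n}>0$'' silently treats $D(\lambda_{n}(0),0)$ as a real number, but by (23) its reality is guaranteed only when $\lambda_{n}(0)$ is real --- which is precisely what is to be proved in this direction. If $\lambda_{n}(0)$ is nonreal, then $B$ and $B^{\prime}$ evaluated at $\lambda_{n}(0)$ are in general nonreal, so the $o(1)$ in $D=q_{2n}q_{-2n}(1+o(1))$ is complex; from $q_{2n}q_{-2n}>0$ you only learn that $D$ lies near the positive real axis (say $\operatorname{Re}D>0$), not that $D\geq0$ in the sense required by Theorem 8, whose backward implication needs $D$ real and nonnegative so that $F(\lambda_{n}(t),t)=\pm\sqrt{D}$ is real in (27) and the contraction estimate (24) can be applied. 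Hence no contradiction with nonreality of $\lambda_{n}(0)$ is reached, and the argument is circular at exactly this point. The paper avoids the circularity by proving sufficiency along a different route: if (29) holds, then by Theorem 2.12 of [23] the eigenvalues $\lambda_{n}(t)$ are simple for all $t\in[0,\pi]$, and the equivalence of statements $(c)$ and $(a)$ in Theorem 7 then yields $\Gamma_{n}\subset\mathbb{R}$. To repair your proof you must either import that simplicity result or add a genuine extra argument (for instance, pairing a hypothetical nonreal $\lambda_{n}(0)$ with $\lambda_{-n}(0)=\overline{\lambda_{n}(0)}$ via Summary 9 and Summary 6 and analyzing both branches of (19)); sign bookkeeping on $D$ alone does not close this direction.
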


\begin{proof}
It follows from (25) that
\begin{align}
C(\lambda_{n}(0),0)  &  =0,\text{ }D(\lambda_{n}(0),0)=q_{2n}q_{-2n}%
+q_{2n}B^{^{\prime}}+q_{-2n}B+BB^{^{\prime}},\\
D(\lambda_{n}(0),0)  &  =q_{2n}q_{-2n}+o\left(  q_{-2n}n^{-s-1}\right)
+o\left(  q_{2n}n^{-s-1}\right)  +o\left(  n^{-2s-2}\right)  .\nonumber
\end{align}
This with (18) implies that
\begin{equation}
D(\lambda_{n}(0),0)=q_{2n}q_{-2n}(1+o\left(  1\right)  )
\end{equation}
as $n\rightarrow\infty.$ Now suppose that (29) holds. Then by Theorem 2.12 of
[23] the eigenvalue $\lambda_{n}(t)$ is simple for all $t\in\lbrack0,\pi].$
Therefore, by Theorem 7, $\Gamma_{n}$ is real.

Now suppose that $\Gamma_{n}$ is real. Then by Corollary 1, (28) holds. The
first equality in (28) with (31) imply that $q_{2n}q_{-2n}>0,$ since
$q_{2n}q_{-2n}$ is real (see (16)) and nonzero (see (18)). In the same way
using the second equality in (28) we obtain $q_{2n+1}q_{-2n-1}>0$ \ 
\end{proof}

In [18] we proved that if $q\in W_{1}^{s}[0,1]$ and (17) holds then%
\begin{align}
B(\lambda_{n}(0),0) &  =-S_{2n}+2Q_{0}Q_{2n}+o\left(  n^{-s-2}\right)
,\quad\nonumber\\
B^{\prime}(\lambda_{n}(0),0) &  =-S_{-2n}+2Q_{0}Q_{-2n}+o\left(
n^{-s-2}\right)  ,
\end{align}
where $Q_{k}\ $and$\,S_{k}$ are the Fourier coefficients of the function $Q$
and $S$ defined by
\[
Q(x)=\int_{0}^{x}q(t)\,dt,\quad S(x)=Q^{2}(x)
\]
(see Lemma 6 of [18] ) and $-S_{\pm2n}+2Q_{0}Q_{\pm2n}$ are real numbers (see
page 655). Moreover
\begin{equation}
q_{n}=o\left(  n^{-s}\right)  ,\text{ }S_{\pm2n}=o\left(  n^{-s-1}\right)
,\text{ }Q_{\pm2n}=o\left(  n^{-s-1}\right)
\end{equation}
(see page 658 of [18] ). Therefore using (30), (32) and (33) we obtain
\[
D(\lambda_{n}(0),0)=P_{2n}+o\left(  n^{-2s-2}\right)  ,
\]
where $P_{n}=q_{n}q_{-n}-q_{n}\left(  S_{-n}-2Q_{0}Q_{-n}\right)
-q_{-n}(S_{n}-2Q_{0}Q_{n}).$ Similarly
\[
D(\lambda_{n}(\pi),\pi)=P_{2n+1}+o\left(  n^{-2s-2}\right)  ,
\]
If there exist $c$ such that%
\begin{equation}
\left\vert P_{n}\right\vert >cn^{-2s-2},
\end{equation}
then we have
\begin{equation}
D(\lambda_{n}(0),0)=P_{2n}(1+o\left(  1\right)  ),\text{ }D(\lambda_{n}%
(\pi),\pi)=P_{2n+1}(1+o\left(  1\right)  ).
\end{equation}
Therefore using (35), Theorem 8 and Theorem 5 and taking into account that if
$q\in W_{1}^{s}[0,1]$ and (2) holds then (17) holds too we obtain

\begin{theorem}
If $q\in W_{1}^{s}[0,1],$ (2) and (34) hold, $n$ is a large number and
$P_{n}<0$ then $\Gamma_{n}$ has the nonreal tails.
\end{theorem}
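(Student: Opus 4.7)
The plan is to chain three ingredients already established above: the endpoint expansion (35) of $D$ in terms of $P_{2n}$ and $P_{2n+1}$, the reality criterion of Theorem 8, and the tail characterization of Theorem 5. First I would verify that the hypotheses deliver (35). The assumption $q\in W_{1}^{s}[0,1]$ together with (2) guarantees that the boundary conditions (17) are automatically satisfied, so the expansions (32) of $B(\lambda_n(0),0)$ and $B^{\prime}(\lambda_n(0),0)$ (and their analogs at $t=\pi$) are available, as are the decay estimates (33) for $q_{\pm n}$, $S_{\pm 2n}$, and $Q_{\pm 2n}$. Substituting them into (30) and collecting terms yields
\[
D(\lambda_n(0),0)=P_{2n}+o(n^{-2s-2}),\qquad D(\lambda_n(\pi),\pi)=P_{2n+1}+o(n^{-2s-2}).
\]

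Next I would invoke the lower bound (34): since $|P_k|>ck^{-2s-2}$, the $o(n^{-2s-2})$ remainder is strictly dominated by $P_{2n}$ (respectively $P_{2n+1}$) once $n$ is large. This is exactly the factorization (35), and in particular the sign of $D(\lambda_n(0),0)$ coincides with that of $P_{2n}$ and the sign of $D(\lambda_n(\pi),\pi)$ with that of $P_{2n+1}$ for all large $n$. Thus, once the hypothesis $P_n<0$ is read off for the appropriate parity, the corresponding $D$-value at $t=0$ or $t=\pi$ is strictly negative for the band under consideration.

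Finally I would feed this into Theorem 8, which asserts that for $t\in[0,h]\cup[\pi-h,\pi]$ the eigenvalue $\lambda_n(t)$ is real if and only if $D(\lambda_n(t),t)\geq 0$. Hence the negativity of $D$ at $t=0$ forces $\lambda_n(0)\notin\mathbb{R}$, and the negativity at $t=\pi$ forces $\lambda_n(\pi)\notin\mathbb{R}$. Theorem 5 then upgrades this nonreality of an endpoint eigenvalue into the presence of a left (respectively right) nonreal tail of the band $\Gamma_n$, which is the desired conclusion.

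The main obstacle is purely the bookkeeping comparison of magnitudes: one must check that the remainder in the expansion of $D$ is genuinely $o(|P_n|)$, not merely small in absolute terms. This is precisely why condition (34) is imposed---the lower bound on $|P_n|$ is tuned to the decay rate of the error terms produced by (32)--(33), so the dominant-term argument goes through. Beyond this sign-chase along the chain (34) $\Rightarrow$ (35) $\Rightarrow$ Theorem 8 $\Rightarrow$ Theorem 5, no fundamentally new analytic estimate is required.
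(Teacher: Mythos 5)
Your proposal is correct and follows essentially the same route as the paper, which likewise obtains Theorem~10 by combining (35) (derived from (30), (32) and (33), with the remark that $q\in W_{1}^{s}[0,1]$ together with (2) implies (17)) with the reality criterion of Theorem~8 and the tail characterization of Theorem~5. Your explicit check that (34) makes the $o(n^{-2s-2})$ remainder subordinate to $P_{2n}$ and $P_{2n+1}$ is exactly the content of the paper's passage from $D(\lambda_{n}(0),0)=P_{2n}+o(n^{-2s-2})$ to $D(\lambda_{n}(0),0)=P_{2n}(1+o(1))$, so there is no substantive difference.
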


Now let us consider the connections between the reality of $\sigma(L(q))$ and
the spectrality of $L(q)$ for $q\in S_{p}.$ In [13] it was proved that
$L(q)$\textit{\ }is a spectral operator if and only if\textit{ }%
\begin{equation}
\sup_{\gamma\in R}(\sup_{t\in(-\pi,\pi]}\parallel e(t,\gamma)\parallel
)<\infty,
\end{equation}
where $e(t,\gamma)$ be the spectral projection defined by contour integration
of the resolvent of $L_{t}(q)$, $\gamma\in R$ and $R$ is the ring consisting
of all sets which are the finite union of the half closed rectangles (see
Theorem 3.5). Note that the spectral singularities of the operator $L(q)$ are
the points of its spectrum in neighborhoods of which the projections of
$e(t,\gamma)$ are not uniformly bounded (see [9] and [21-23]). Therefore if
$L(q)$ has a spectral singularity then it is not spectral operator. However
may be $\sigma(L(q))$ does not contain a spectral singularity nevertheless
$L(q)$ is not a spectral operator. It happens if $\parallel e(t,\gamma
)\parallel\rightarrow\infty$ as $\gamma$ goes to infinity. In this case we say
that $L(q)$ has a spectral singularity at infinity (see Definition 3.2 in
[23]). According to (36), we say that $L(q)$ as an asymptotically spectral
operator if the inequality obtained from (36) by replacing $R$ with $R(C)$
holds, where $C$ is a large positive number and $R(C)$ is the ring consisting
of all sets which are the finite union of the half closed rectangles lying in
$\{\lambda\in\mathbb{C}:\mid\lambda\mid>C\}$ (see Definition 3.6 in [23]). If
$L(q)$ is an asymptotically spectral operator then it has noncomplicated
spectral expansion (see Theorem 4 in [27]).

\begin{theorem}
$(a)$ Let $\left\vert n\right\vert >N(h)$ and $a\in(0,\pi).$ Then the
followings are equivalent.

$1)$ $\lambda_{n}(a)$ is a spectral singularity of $L(q)$.

$2)$ $\lambda_{n}(a)$ is a complexation point of $\sigma(L(q)).$

$3)$ $\lambda_{n}(a)$ is a multiple eigenvalue of $L_{a}(q).$

$4)$ If $a\in(0,h]$ then $\left\{  \lambda_{n}(t):t\in\lbrack0,a)\right\}  ;$
if $a\in\lbrack\pi-h,\pi)$ then $\left\{  \lambda_{n}(t):t\in(a,\pi]\right\}
\ $ is a nonreal tail of $\Gamma_{n}.$

$(b)$ Let $q\in S_{p}$ for some $p=0,1,...$ Then the followings are equivalent.

$1)$ $L(q)$ is an asymptotically spectral operator.

$2)$ There exists $m\geq N(h)$ such that (29) holds for all $\left\vert
n\right\vert >m$.

$3)$ There exist $m\geq N(h)$ such that $\Gamma_{n}$ for $\left\vert
n\right\vert >m$ are real pairwise disjoint intervals separated by the gaps of
the spectrum.
\end{theorem}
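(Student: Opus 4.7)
My plan is to treat this theorem as a synthesis of the structural results already proved in Section 2 (especially Theorem 2, Theorem 4, Theorem 5, Theorem 7, Proposition 2) together with Theorem 9 and the spectral-singularity characterization from [23]. Neither part should require a new analytic computation; the work lies in wiring together the right equivalences.

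For part $(a)$, I would prove the chain $3)\Rightarrow 2)\Rightarrow 4)\Rightarrow 3)$ first, and then insert $1)$ as equivalent to $2)$. The implication $2)\Rightarrow 3)$ is immediate: Theorem 2 says every complexation point is a multiple eigenvalue, and Proposition 2$(a)$ caps the multiplicity at $2$ for $|n|>N(h)$, so $\lambda_n(a)$ is a double eigenvalue. For $3)\Rightarrow 2)$, Summary 8 excludes $a\in[h,\pi-h]$, hence $a$ lies in $(0,h)\cup(\pi-h,\pi)$; by Theorem 4$(a)$ it must coincide with $\varepsilon_n$ or $\pi-\delta_n$, and since $a\neq 0,\pi$ Theorem 4$(c)$ immediately produces complexation. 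The equivalence $3)\Leftrightarrow 4)$ is then a direct reading of Theorem 4$(d)$ combined with the definition of a nonreal tail (Definition 2) and the observation that the endpoint of a nonreal tail inside $(0,\pi)$ is by construction a complexation point. What is left is $1)\Leftrightarrow 2)$: I would use the fact that outside a compact set the spectral projection $e(t,\gamma)$ is controlled by the distance between $\lambda_n(t)$ and the nearest other eigenvalue of $L_t(q)$ (this is the content of the estimates underlying Definition 3.2 in [23]). Where $\lambda_n(a)$ is simple, the analytic eigenprojection is locally bounded; where $\lambda_n(a)$ coalesces with $\lambda_{-n}(a)$ or $\lambda_{-n-1}(a)$ through a square-root-type branch point (the generic behavior at a complexation point, cf.\ Theorem 4$(c)(d)$), the projection blows up, so the point is a spectral singularity.

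For part $(b)$, the equivalence $2)\Leftrightarrow 3)$ is essentially Theorem 9: for $q\in S_p$ and large $|n|$, $\Gamma_n\subset\mathbb{R}$ iff $q_nq_{-n}>0$; and inside the proof of Theorem 9 the invoked Theorem 2.12 of [23] also yields that every $\lambda_n(t)$ is simple when (29) holds, which by Proposition 1$(b)$ and Remark 1 forces the bands $\Gamma_n$ for $|n|>m$ to be pairwise disjoint intervals separated by gaps. For $3)\Rightarrow 1)$, disjointness and reality of $\Gamma_n$ for $|n|>m$ mean that $\lambda_n(t)$ is a simple real eigenvalue for every $t\in[0,\pi]$ and every $|n|>m$; by part $(a)$ this excludes spectral singularities in $\{|\lambda|>C\}$ for $C$ large, and the uniform estimates of [22, 23] then give the supremum bound in (36) restricted to $R(C)$, i.e., asymptotic spectrality (see [23, 27]). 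For $1)\Rightarrow 3)$, I would argue contrapositively: if some $\Gamma_n$ with $|n|>m$ arbitrarily large is not a real disjoint interval, then by Theorem 7 either $\lambda_n(0)$ or $\lambda_n(\pi)$ is nonreal, or two bands share an interior point; both situations, via part $(a)$, give a spectral singularity at $\lambda_n(\varepsilon_n)$ or $\lambda_n(\pi-\delta_n)$, contradicting asymptotic spectrality.

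The main obstacle, and the only place where genuinely new analysis is needed, is the implication $2)\Rightarrow 1)$ in part $(a)$: showing that a complexation point is actually a spectral singularity (not merely a point near which $\sigma(L)$ leaves $\mathbb{R}$). For this I would rely on the branch-point expansion for $\lambda_n(t)$ and $\lambda_{\pm n}(t)$ (or $\lambda_{-n-1}(t)$) near $\varepsilon_n$ (respectively $\pi-\delta_n$) that is implicit in equation (19), Summary 9, and (25): near a double eigenvalue the two branches satisfy $(\lambda-F(\lambda,t))^2=D(\lambda,t)$ with $D$ vanishing linearly in $t-\varepsilon_n$, giving a square-root behavior of the eigenvalues and, correspondingly, a $|t-\varepsilon_n|^{-1/2}$ growth of the associated Riesz projections. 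Everything else in the proof is a bookkeeping exercise combining already proved results.
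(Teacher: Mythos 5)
Your wiring of the equivalences matches the paper's architecture almost exactly: for part $(a)$ the paper disposes of $2)\Leftrightarrow3)\Leftrightarrow4)$ in one line by citing Theorem 4 (your chain through Theorem 2, Proposition 2$(a)$, Summary 8 and Theorem 4$(a),(c),(d)$ is just an unpacking of that theorem's content), and for part $(b)$ the paper argues precisely as you do: $2)\Leftrightarrow3)$ via Theorem 9, Theorem 2.12 of [23] and Remark 1; $2)\Rightarrow1)$ by citing the result of [23] that (29) implies asymptotic spectrality; and $\lnot2)\Rightarrow\lnot1)$ via part $(a)$ producing spectral singularities $\lambda_{n_k}(t_{n_k})$, which violates the hypothesis of Theorem 1$(c)$ of [27]. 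One small caution on your $3)\Rightarrow1)$ phrasing in $(b)$: absence of spectral singularities on $\{|\lambda|>C\}$ does \emph{not} by itself give the supremum bound in (36) restricted to $R(C)$ --- the paper itself emphasizes that $\Vert e(t,\gamma)\Vert$ may still blow up as $\gamma\rightarrow\infty$ (a ``spectral singularity at infinity,'' cf.\ Summary 11 and Theorem 13, where $H(a,-a)$ has no finite singularity issue of this kind yet fails to be spectral). The uniform projection bound is exactly the quantitative content of the cited theorem of [23] under $q\in S_p$ and (29); since you also cite [23, 27] there, this is a looseness of emphasis rather than an error.

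The genuine divergence, and the genuine gap, is in $1)\Leftrightarrow2)$ of part $(a)$. The paper does no new analysis here: it invokes Proposition 2 of [26], which states outright that for $|n|>N(h)$ and $a\in(0,\pi)$, $\lambda_n(a)$ is a spectral singularity if and only if it is a multiple eigenvalue, so $1)\Leftrightarrow3)$ is a citation. Your replacement sketch has two unjustified steps. First, nothing in (19)--(25) guarantees that $D(\lambda,t)$ vanishes \emph{linearly} in $t-\varepsilon_n$; a higher-order zero is not excluded by anything proved in Section 2 or 3, so the asserted $|t-\varepsilon_n|^{-1/2}$ rate is not established. Second, and more fundamentally, square-root coalescence of the two eigenvalue branches does not by itself imply blow-up of the Riesz projections: a double eigenvalue with two independent eigenfunctions (a semisimple crossing) produces no singularity. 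What makes the projection norm diverge is that the pairing between the eigenfunction and the corresponding adjoint eigenfunction degenerates, equivalently that the double eigenvalue at $t=a\in(0,\pi)$ carries a Jordan chain. That geometric-multiplicity-one fact does hold here --- for $t\in(0,\pi)$ the two Floquet multipliers $e^{\pm it}$ are distinct, so the eigenspace of $L_t$ at any $\lambda$ is one-dimensional and a double eigenvalue forces an associated function --- but this argument is exactly the content of [26, Proposition 2], and it is the missing link in your sketch. As written, your proof of $2)\Rightarrow1)$ would fail for a semisimple double point and does not prove the rate it claims; either supply the Jordan-structure argument or, as the paper does, cite [26].
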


\begin{proof}
$(a)$ By Theorem 4, $2),$ $3)$ and $4)$ are equivalent. On the other hand, by
Proposition 2 of [26] $\lambda_{n}(a)$ is a spectral singularity if and only
if it a multiple eigenvalue.

$(b)$ First let us show that $2)$ and $3)$ are equivalent. By Theorem 9, $3)$
implies $2)$. Now suppose that $2)$ holds. Then again by Theorem 9,
$\Gamma_{n}$\ are real for all $\left\vert n\right\vert >m.$ On the other hand
if (29) holds then by Theorem 2.12 of [23] the eigenvalues $\lambda_{n}(t)$
for $t\in\lbrack0,\pi]$ are simple and hence by by Remark 1, $\Gamma_{n}$ for
$\left\vert n\right\vert >m$ are real pairwise disjoint intervals, that is
$3)$ holds. In [23] we proved that if (29) holds then $L(q)$ is asymptotically
spectral operator, that is, if $2)$ holds then $1)$ holds too. Now suppose
that $2)$ does not hold. Then by Theorem 9 there exists a sequence $\left\{
n_{k}\right\}  $ such that $\Gamma_{n_{k}}$ is not a real interval that is
contains a nonreal tail. Then by $(a)$ there exist $t_{n_{k}}\in(0,\pi)$ such
that $\lambda_{n_{k}}(t_{n_{k}})$ is a spectral singularity. \ It means that
the conditions of Theorem 1$(c)$ of [27] does not hold and hence $L(q)$ is not
an asymptotically spectral operator. Thus $2)$ and $1)$ are equivalent
\end{proof}

Now we consider a simple example that helps to see the complexity of the
relations between the spectrum and spectrality of the PT-symmetric periodic operators.

\begin{example}
The operators $L_{t}(q)$ and $L(q)$ are denoted by $H_{t}(a,b)$ and $H(a,b)$
when
\begin{equation}
\text{ }q(x)=ae^{-i2\pi x}+be^{i2\pi x},
\end{equation}
where $a$ and $b$ are the real numbers, that is, $q$ is a PT-symmetric potential.
\end{example}

To consider the spectrum of these operators we use the following results of
[24] formulated as Summary 10 (see Theorem 1 and (26) of [24]).

\begin{summary}
If $ab=cd$, then $\sigma(H(a,b))=\sigma(H(c,d))$ and $\sigma(H_{t}%
(a,b))=\sigma(H_{t}(c,d))$. The operators $H_{t}(a,b))$ and $H_{t}(c,d)$ have
the same characteristic equation (4).
\end{summary}

By this summary if $ab>0$ then the spectra of $H(c,d)$ and $H_{t}(a,b)$
coincide with the spectrum of the self adjoint operators $L(2c\cos2\pi x)$ and
$L_{t}(2c\cos2\pi x)$ respectively, where $c$ is a positive square root of
$ab.$ It is well-known that (see Chapter 21 of [19] and Chapter 2 of [7]) all
eigenvalues of $L_{t}(2c\cos2\pi x)$ for all $t\in\lbrack0,2\pi)$ are real and
simple and all gaps in the spectrum of $L(2c\cos2\pi x)$ are open. Therefore
we have the following.

\begin{theorem}
If $ab>0$ then all eigenvalues of $H_{t}(a,b)$ for all $t\in\lbrack0,2\pi)$
are real and simple and the spectrum of $H(a,b)$ consist of the real intervals
$\Gamma_{n}$ separated by gaps, where $H_{t}(a,b)$ and $H(a,b)$ are defined in
Example 1.
\end{theorem}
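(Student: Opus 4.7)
The plan is to reduce the PT-symmetric operator $H(a,b)$ to a self-adjoint Mathieu-type operator via Summary 10 and then invoke the classical spectral theory of that operator. Since $ab>0$, set $c=\sqrt{ab}>0$ and take $d=c$, so that $cd=c^{2}=ab$. Then the potential with coefficients $(c,d)=(c,c)$ is
\[
ce^{-i2\pi x}+ce^{i2\pi x}=2c\cos 2\pi x,
\]
which is a real-valued (and PT-symmetric) potential, so $L(2c\cos 2\pi x)$ and $L_{t}(2c\cos 2\pi x)$ are self-adjoint operators. Summary 10 asserts that $\sigma(H(a,b))=\sigma(L(2c\cos 2\pi x))$ and $\sigma(H_{t}(a,b))=\sigma(L_{t}(2c\cos 2\pi x))$, and moreover that the two families share the same characteristic equation (4); hence their Hill discriminants coincide.

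Next I would invoke the classical Mathieu-operator results referenced in the paragraph preceding the theorem (Chapter 21 of [19] and Chapter 2 of [7]): for the self-adjoint operator $L(2c\cos 2\pi x)$ with $c\neq 0$, every eigenvalue of $L_{t}(2c\cos 2\pi x)$ is real and simple for each $t\in[0,2\pi)$, and every gap in $\sigma(L(2c\cos 2\pi x))$ is open. Because the Hill discriminants of $H_{t}(a,b)$ and $L_{t}(2c\cos 2\pi x)$ are identical entire functions, the roots of the characteristic equation $F(\lambda)=2\cos t$ coincide together with their multiplicities; in particular the eigenvalues of $H_{t}(a,b)$ are exactly those of $L_{t}(2c\cos 2\pi x)$ with the same (simple) multiplicity and therefore are real and simple for every $t\in[0,2\pi)$.

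Finally, I would read off the band structure. Applying Theorem 1(c), the reality and simplicity of all eigenvalues at $t=0,\pi$ guarantee that $\sigma(H(a,b))$ is entirely real, so each band $\Gamma_{n}$ is a real interval. Openness of every gap of $\sigma(L(2c\cos 2\pi x))$, transferred through Summary 10, then shows that the intervals $\Gamma_{n}$ are pairwise disjoint and separated by genuine gaps, completing the proof.

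There is no genuine obstacle in this argument; the only subtlety worth stating carefully is that Summary 10 provides equality of characteristic equations, which is what lets simplicity (not merely the set-theoretic spectrum) be transferred from the self-adjoint model to $H_{t}(a,b)$. Once this observation is made, the theorem is an immediate consequence of the well-known spectral properties of the Mathieu operator together with Theorem 1(c).
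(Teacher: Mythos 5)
Your proof is correct and takes essentially the same route as the paper, which derives the theorem directly from the paragraph preceding it: apply Summary 10 with $c=d=\sqrt{ab}$ to identify the spectra (and eigenvalues) of $H_{t}(a,b)$ with those of the self-adjoint Mathieu operator $L_{t}(2c\cos 2\pi x)$, then quote the classical facts that the latter's eigenvalues are real and simple for all $t\in[0,2\pi)$ and that all gaps are open. Your observation that the coincidence of the characteristic equations is what lets simplicity (and not merely the set-theoretic spectrum) be transferred is precisely the role of the second sentence of Summary 10, so this is a faithful, slightly more explicit rendering of the paper's argument.
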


To consider the spectrality of $H(a,b)$ we use the following results of [25]
and [27] formulated as Summary 11 (see Prop. 3 and Theorem 6 of [25] and
Corollary 1 of [27]).

\begin{summary}
$(a)$ If $\mid a\mid\neq\mid b\mid,$ then the operator $H(a,b)$ has the
spectral singularity at infinity and hence is not an asymptotically spectral operator.

$(b)$ \textit{ }The operator $H(a,b)$ is an asymptotically spectral operator
and has no spectral singularity at infinity if and only if
\[
\mid a\mid=\mid b\mid,\text{ }\inf_{q,p\in\mathbb{N}}\{\mid q\alpha
-(2p-1)\mid\}\neq0,
\]
where $\alpha=\pi^{-1}\arg(ab).$
\end{summary}

This summary immediately yields the following.

\begin{theorem}
The PT-symmetric operator $H(a,b)$ is a spectral operator if and only if
$a=b,$ that is, (37) is the real potential $2a\cos2\pi x$
\end{theorem}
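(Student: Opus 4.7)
The plan is to deduce the claim from Summary 11 combined with the observation that any spectral operator is \emph{a fortiori} an asymptotically spectral operator. Indeed, the ring $R(C)$ of finite unions of half-closed rectangles contained in $\{|\lambda|>C\}$ is a subring of $R$, so the uniform bound (36) in particular forces the analogous bound with $R$ replaced by $R(C)$. Consequently, a spectral operator cannot have a spectral singularity at infinity, and by Summary 11 this already forces $|a|=|b|$.

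For sufficiency, if $a=b$ then (37) collapses to $q(x)=2a\cos 2\pi x$, which is real. Hence $H(a,a)=L(2a\cos 2\pi x)$ is a self-adjoint operator in $L_2(-\infty,\infty)$, and therefore a (scalar type) spectral operator. This direction is essentially by inspection.

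For necessity, suppose $H(a,b)$ is spectral. The observation above gives $|a|=|b|$, so since $a,b\in\mathbb{R}$ we have either $a=b$ or $a=-b$. To rule out the second possibility, assume $a=-b\ne 0$. Then $ab=-a^{2}<0$, whence $\arg(ab)=\pi$ and $\alpha:=\pi^{-1}\arg(ab)=1$. The Diophantine condition of Summary 11(b) becomes
\[
\inf_{q,p\in\mathbb{N}}|q\alpha-(2p-1)|=\inf_{q,p\in\mathbb{N}}|q-(2p-1)|,
\]
which equals $0$ (take $q=1,\,p=1$). Therefore $H(a,-a)$ is \emph{not} asymptotically spectral, and in particular not spectral, contradicting the hypothesis. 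Thus $a=b$. The trivial case $a=b=0$ is subsumed, since then $q\equiv 0$ and $H(0,0)$ is self-adjoint.

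There is essentially no obstacle beyond invoking Summary~11 correctly; the only subtle point is recording that \emph{spectral} implies \emph{asymptotically spectral}, which lets us apply both parts of Summary~11 to the hypothesis of the theorem. Everything else is an arithmetic check on $\alpha$ in the two cases $a=b$ and $a=-b$.
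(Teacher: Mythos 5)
Your proof is correct and takes essentially the same route as the paper: sufficiency from the self-adjointness of $H(a,a)=L(2a\cos 2\pi x)$, and necessity by splitting $a\neq b$ into the cases $|a|\neq|b|$ (handled by Summary 11$(a)$) and $b=-a$ (handled by Summary 11$(b)$ with $\alpha=\pi^{-1}\arg(ab)=1$, where $q=1$, $p=1$ kills the Diophantine condition). Your explicit observation that a spectral operator is asymptotically spectral, because $R(C)\subset R$ so the bound (36) persists on the subring, merely spells out a step the paper uses implicitly when it concludes ``not a spectral operator'' from Summary 11.
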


\begin{proof}
If $a=b$ then the operator $H(a,b)$ is the self-adjoint operator $L(2a\cos2\pi
x)$ and hence is the spectral operator. Now suppose that $a\neq b.$ Since $a$
and $b$ are the real numbers, it is possible in the following cases. Case 1:
$\mid a\mid\neq\mid b\mid$ and Case 2: $b=-a.$ In Case 1 by Summary 11$(a),$
the operator $H(a,b)$ has the spectral singularity at infinity and hence is
not a spectral operator. \ In Case 2 we have $\pi^{-1}\arg(ab)=1.$ Therefore,
by Summary 11$(b),$ $H(a,b)$ is not a spectral operator
\end{proof}

Now using Theorem 4 and Theorem 5 we prove the following.

\begin{theorem}
Suppose that (2) holds. If there exists $m>N(h)$ such that $\lambda_{n}(0)$
and $\lambda_{n}(\pi)$ for $n>m$ are nonreal numbers then there exists $R$
such that $[R,\infty)\subset\sigma(L(q))$ and the number of gaps in the real
part $\operatorname{Re}(\sigma(L(q)))$ of $\sigma(L(q))$ is finite.
\end{theorem}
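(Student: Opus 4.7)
The plan is to combine the hypothesis with PT-symmetry, Theorem 5 and Theorem 4 to show that for all sufficiently large $|n|$ the band $\Gamma_n$ has a nondegenerate real part whose two endpoints match with the real parts of two neighboring bands, and that the union of these real intervals equals a half line.

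First I would extend the hypothesis from $n>m$ to all $|n|>m$. By Summary 6 the conjugate $\overline{\lambda_n(0)}$ is an eigenvalue of $L_0(q)$, and by Summary 9 the only eigenvalues of $L_0(q)$ within the disk of radius $15\pi n h$ about $(2\pi n)^2$ are $\lambda_n(0)$ and $\lambda_{-n}(0)$. Nonreality of $\lambda_n(0)$ then forces $\lambda_{-n}(0)=\overline{\lambda_n(0)}$, which is also nonreal. An identical argument using (12) and the value $(2\pi n+\pi)^2$ gives $\lambda_{-n-1}(\pi)=\overline{\lambda_n(\pi)}$, also nonreal. After enlarging $m$ if necessary, this shows that for every $|n|>m$ both endpoints $\lambda_n(0)$ and $\lambda_n(\pi)$ of $\Gamma_n$ are nonreal.

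Next, for each such $n$, Theorem 5 yields both a left and a right nonreal tail of $\Gamma_n$, together with real complexation points $\lambda_n(\varepsilon_n)$ and $\lambda_n(\pi-\delta_n)$ with $\varepsilon_n,\delta_n>0$. By Theorem 4(d) the real part $\operatorname{Re}(\Gamma_n)$ is the closed interval $\{\lambda_n(t):t\in[\varepsilon_n,\pi-\delta_n]\}$, and by (13) and Proposition 2(b) its endpoints coincide with a point of $\Gamma_{-n}$ and a point of $\Gamma_{-n-1}$ respectively. Running the index $n$ through all $|n|>m$ links the real parts into a chain $\ldots,\Gamma_{n-1},\Gamma_{-n},\Gamma_n,\Gamma_{-n-1},\Gamma_{n+1},\ldots$ in which consecutive members share exactly one real endpoint. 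By Summary 9 these shared endpoints lie near $(2\pi k)^2$ and $(2\pi k+\pi)^2$ and tend to $+\infty$ with $k$, so the union of these real intervals is a half line $[R,\infty)$ contained in $\sigma(L(q))$.

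Finally, the remaining bands $\Gamma_n$ with $|n|\leq m$ are finite in number and, by Theorem 1(a), each contributes at most one bounded closed interval (or point) to the real line. Hence $\operatorname{Re}(\sigma(L(q)))$ is the union of $[R,\infty)$ with finitely many bounded real sets, and therefore has only finitely many gaps. The main obstacle is the opening PT-conjugation step identifying $\overline{\lambda_n(0)}$ with $\lambda_{-n}(0)$ and $\overline{\lambda_n(\pi)}$ with $\lambda_{-n-1}(\pi)$, as this is what rules out the possibility that the PT-partner of a nonreal endpoint escapes to a distant eigenvalue; once that identification is secured, the chaining argument through (13) together with Theorem 4(d) completes the proof routinely.
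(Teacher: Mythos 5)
Your proposal is correct and follows essentially the same route as the paper's proof: the PT-conjugation step via Summary 6 and Summary 9 identifying $\overline{\lambda_{n}(0)}$ with $\lambda_{-n}(0)$ (and $\overline{\lambda_{n}(\pi)}$ with $\lambda_{-n-1}(\pi)$), Theorem 5 together with Theorem 4$(d)$ to get $\operatorname{Re}(\Gamma_{n})=\left[\lambda_{n}(\varepsilon_{n}),\lambda_{n}(\pi-\delta_{n})\right]$, the chaining of consecutive real parts through (13), and Theorem 1$(a)$ to handle the finitely many bands with $|n|\leq m$. The only difference is presentational: you spell out at the start the identification of conjugate endpoints that the paper compresses into the single sentence ``It follows from Summary 6 and Summary 9 that $\lambda_{-n}(0)$ is also nonreal.''
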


\begin{proof}
Let and $n>m+1>N(h)+1.$ By Theorem 5 there exist complexation points
$\lambda_{n}(\varepsilon_{n})$ and $\lambda_{n}(\pi-\delta_{n})$, where
$\varepsilon_{n}\in(0,h],$ $(\pi-\delta_{n})\in\lbrack\pi-h,\pi).$ Then using
Theorem 4$(d)$ and taking into account that $\Gamma_{n}$ is a simple open
curve (see Proposition 1$(b)$) and $\lambda_{n}(\varepsilon_{n})<\lambda
_{n}(\pi-\delta_{n})$ (see (11) and (12)) we obtain%
\[
\operatorname{Re}(\Gamma_{n})=\left\{  \lambda_{n}(t):t\in\lbrack
\varepsilon_{n},\pi-\delta_{n}]\right\}  =\left[  \lambda_{n}(\varepsilon
_{n}),\lambda_{n}(\pi-\delta_{n})\right]  .
\]
It follows from Summary 6 and Summary 9 that $\lambda_{-n}(0)$ is also nonreal
number. Therefore in the same way and using (13) we prove that the right end
point of $\operatorname{Re}(\Gamma_{-n})$ is $\lambda_{-n}(\varepsilon_{n})$
and coincides with the left ent point of $\operatorname{Re}(\Gamma_{n}).$
Instead of $\lambda_{n}(0)$ using $\lambda_{n}(\pi)$ and repeating this proof
we obtain that the right end point of $\operatorname{Re}(\Gamma_{n})$
coincides with the left ent point of $\operatorname{Re}(\Gamma_{-n-1}).$
Therefore there exist $R$ such that $[R,\infty)\subset\sigma(L(q)).$ To
complete the proof it is enough to note that by Theorem 1$(a),$
$\operatorname{Re}\left(  \Gamma_{n}\right)  $ for each $n\in\mathbb{Z}$ is
either empty set or a point or an interval.
\end{proof}

Now we find the conditions on the \ Fourier coefficients%
\begin{equation}
\text{ }f_{n}=\int_{0}^{1}f(x)\cos2\pi nxdx,\text{ }g_{n}=\int_{0}^{1}%
g(x)\sin2\pi nxdx\text{ }%
\end{equation}
for which the number of gaps in the real part of the spectrum of $L(q)$ is
finite, where $f=\operatorname{Re}q$ and $g=\operatorname{Im}q.$ It is clear
that if $q$ is PT-symmerric, then $f$ and $g$ are even and odd functions
respectively and hence
\begin{equation}
q_{n}=f_{n}+g_{n},\text{ }q_{-n}=f_{n}-g_{n}.
\end{equation}
Using (34), (35), Theorem 8 and Theorem 14 and taking into account that if
$q\in W_{1}^{s}[0,1]$ and (2) holds then (17) holds too we obtain we obtain.

\begin{theorem}
Suppose that (2) holds and $q\in W_{1}^{s}[0,1]$ for some $s\geq0.$ If there
exist positive constants $m$ and $\alpha$ such that for $n>m$ the inequality
\begin{equation}
P_{n}<-\alpha n^{-2s-2}%
\end{equation}
holds, then the number of gaps in the real part of the spectrum of $L(q)$ is finite.
\end{theorem}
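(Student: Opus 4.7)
The approach is to chain three earlier results: the equivalence criterion of Theorem 8, the asymptotic identity (35), and the finite-gap conclusion of Theorem 14. Concretely, I would argue that the quantitative sign hypothesis on $P_{n}$ forces both $D(\lambda_{n}(0),0)$ and $D(\lambda_{n}(\pi),\pi)$ to be strictly negative for all sufficiently large $n$; Theorem 8 then forces $\lambda_{n}(0),\lambda_{n}(\pi)\notin\mathbb{R}$ for large $n$; and finally Theorem 14 delivers the conclusion.

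In more detail, since $q\in W_{1}^{s}[0,1]$ together with the periodicity from (2) yields the compatibility conditions (17), the identities (32)--(33) obtained in [18] apply and give
\[
D(\lambda_{n}(0),0)=P_{2n}+o(n^{-2s-2}),\qquad D(\lambda_{n}(\pi),\pi)=P_{2n+1}+o(n^{-2s-2}).
\]
The hypothesis $P_{n}<-\alpha n^{-2s-2}$ for $n>m$ is exactly condition (34) with $c=\alpha$, so the asymptotic refinement (35) is valid, namely
\[
D(\lambda_{n}(0),0)=P_{2n}(1+o(1)),\qquad D(\lambda_{n}(\pi),\pi)=P_{2n+1}(1+o(1)).
\]
Because $P_{2n}$ and $P_{2n+1}$ are negative for all sufficiently large $n$, the factor $(1+o(1))$ is eventually positive, and hence both discriminants are strictly negative once $n$ exceeds some threshold $m'\geq N(h)$.

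Applying the contrapositive of Theorem 8 at the values $t=0$ and $t=\pi$ (both of which lie in $[0,h]\cup[\pi-h,\pi]$), we conclude that $\lambda_{n}(0)\notin\mathbb{R}$ and $\lambda_{n}(\pi)\notin\mathbb{R}$ for every $n>m'$. Theorem 14 applied with this $m'$ then produces an $R$ with $[R,\infty)\subset\sigma(L(q))$ and, in particular, yields that $\operatorname{Re}(\sigma(L(q)))$ has only finitely many gaps. No real obstacle is anticipated: the only delicate point is verifying that the quantitative lower bound $|P_{n}|>\alpha n^{-2s-2}$ dominates the error $o(n^{-2s-2})$ in the expansion of $D$, and this is precisely why the exponent $-2s-2$ appears in the hypothesis.
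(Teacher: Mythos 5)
Your proposal is correct and follows essentially the same route as the paper, which derives this theorem in one step from exactly the ingredients you chain together: the compatibility conditions (17) (guaranteed by (2) and $q\in W_{1}^{s}[0,1]$) yielding the expansions $D(\lambda_{n}(0),0)=P_{2n}+o\left(n^{-2s-2}\right)$ and $D(\lambda_{n}(\pi),\pi)=P_{2n+1}+o\left(n^{-2s-2}\right)$, the refinement (35) valid under (34) (which your hypothesis $P_{n}<-\alpha n^{-2s-2}$ implies with $c=\alpha$), the contrapositive of Theorem 8 at $t=0,\pi$ giving $\lambda_{n}(0),\lambda_{n}(\pi)\notin\mathbb{R}$ for large $n$, and finally Theorem 14. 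Your treatment of the only delicate point---that the lower bound $\left\vert P_{n}\right\vert>\alpha n^{-2s-2}$ dominates the $o\left(n^{-2s-2}\right)$ error so that the sign of $D$ is that of $P_{2n}$ (respectively $P_{2n+1}$)---matches the paper's reasoning, so there is nothing to add.
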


Now we prove the other and more applicable theorem.

\begin{theorem}
Suppose that (2) holds and $q\in W_{1}^{s}[0,1]$ for some $s\geq0.$ If there
exist $\delta>1$, $\beta>0$\ and $m>0$ such that
\begin{equation}
\left\vert g_{n}\right\vert >\beta n^{-s-1},\text{ }\left\vert g_{n}%
\right\vert >\delta\left\vert f_{n}\right\vert
\end{equation}
\ for all $n>m,$ then the number of gaps in the real part of the spectrum of
$L(q)$ is finite, where $f_{n}$ and $g_{n}$ are the Fourier coefficients of
$\operatorname{Re}q$ and $\operatorname{Im}q$ defined in (38).
\end{theorem}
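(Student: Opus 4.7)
The plan is to reduce Theorem 16 to Theorem 15 (and then Theorem 14) by showing that hypothesis (41) forces both $P_n<0$ and the quantitative bound (34) for all sufficiently large $n$. Once this is proved, Theorem 15 gives nonreal tails for every $\Gamma_k$ with $|k|$ large, so both $\lambda_k(0)$ and $\lambda_k(\pi)$ are nonreal for large $k$, and Theorem 14 then delivers finiteness of the gaps in $\operatorname{Re}(\sigma(L(q)))$.

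The first step is to rewrite $P_n$ in terms of $f_n$ and $g_n$. By (39), $q_nq_{-n}=f_n^2-g_n^2$. Moreover, since $q$ is PT-symmetric, the antiderivative $Q(x)=\int_0^xq$ satisfies $\overline{Q(-x)}=-Q(x)$, so each $Q_k$ is purely imaginary, and $S=Q^2$ is again PT-symmetric, so each $S_k$ is real. Consequently $\alpha_n:=S_{-n}-2Q_0Q_{-n}$ and $\alpha'_n:=S_n-2Q_0Q_n$ are real and of order $o(n^{-s-1})$ by (33). Substituting into the definition of $P_n$ yields
\[
P_n=(f_n^2-g_n^2)-f_n(\alpha_n+\alpha'_n)-g_n(\alpha_n-\alpha'_n).
\]

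The key step is to absorb the correction terms into the main $-g_n^2$ term using (41). From $|f_n|\le|g_n|/\delta$ with $\delta>1$ we get
\[
f_n^2-g_n^2\le-(1-\delta^{-2})g_n^2\le-(1-\delta^{-2})\beta^{2}n^{-2s-2},
\]
while $|f_n(\alpha_n+\alpha'_n)+g_n(\alpha_n-\alpha'_n)|\le(1+\delta^{-1})|g_n|\cdot o(n^{-s-1})$. The lower bound $|g_n|>\beta n^{-s-1}$ turns $o(n^{-s-1})/|g_n|$ into $o(1)$, so this correction is $o(g_n^2)$. Adding the two bounds gives
\[
P_n\le-\bigl(1-\delta^{-2}-o(1)\bigr)g_n^2<-\tfrac{1}{2}(1-\delta^{-2})\beta^{2}n^{-2s-2}
\]
for all $n$ sufficiently large, which is both $P_n<0$ and (34).

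The main subtlety is the order bookkeeping: the crude bound $|q_n|=o(n^{-s})$ coming from $q\in W_1^s$ only yields corrections of order $o(n^{-2s-1})$, which is too large to compete with the leading contribution $n^{-2s-2}$. What makes the proof work is the strict inequality $\delta>1$ together with the lower bound $|g_n|>\beta n^{-s-1}$: these let us replace the factor $o(n^{-s})$ by $(1+\delta^{-1})|g_n|$ and then divide by $|g_n|$ to obtain an $o(1)\cdot g_n^2$ correction, and the argument collapses without either ingredient.
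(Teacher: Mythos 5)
Your proposal is correct and takes essentially the same route as the paper: both reduce Theorem 16 to Theorem 15 by verifying the bound (40), you via the decomposition $P_n=(f_n^2-g_n^2)-f_n(\alpha_n+\alpha'_n)-g_n(\alpha_n-\alpha'_n)$ with $S_{\pm n},Q_{\pm n}=o(n^{-s-1})$, the paper equivalently via $|q_{\pm n}|>\varepsilon n^{-s-1}$, $-q_nq_{-n}=g_n^2-f_n^2>\gamma n^{-2s-2}$ and $P_n=(f_n^2-g_n^2)(1+o(1))$. Your added verifications (that the correction terms are real, and that the strict inequality $\delta>1$ together with $|g_n|>\beta n^{-s-1}$ is exactly what turns the correction into $o(1)\cdot g_n^2$) are sound and consistent with what the paper imports from [18].
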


\begin{proof}
It readily follows from (39) and (41) that%
\[
\left\vert q_{n}\right\vert >\varepsilon n^{-s-1},\text{ \ }\left\vert
q_{-n}\right\vert >\varepsilon n^{-s-1}%
\]
and hence
\begin{equation}
-q_{n}q_{-n}=\left(  g_{n}^{2}-f_{n}^{2}\right)  >\gamma n^{-2s-2},\text{
}\frac{\left\vert q_{\pm n}\right\vert }{\left\vert q_{n}q_{-n}\right\vert
}\text{ }=O(n^{s+1})
\end{equation}
for some $\varepsilon>0$ and $\gamma>0.$ Using (2) and taking into account
that $q^{(s)}$ is integrable on $[0,1]$ one can readily verify that the
functions $Q$ and $S$ are $1$ periodic functions, $Q^{(s+1)}$ and $S^{(s+1)}$
are integrable on $[0,1]$ and hence $S_{\pm n}=o\left(  n^{-s-1}\right)  $ and
$Q_{\pm n}=o\left(  n^{-s-1}\right)  $. Therefore, using (42) and the
definition of $P_{n}$ we obtain%
\[
P_{n}=q_{n}q_{-n}-q_{n}\left(  S_{-n}-2Q_{0}Q_{-n}\right)  -q_{-n}%
(S_{n}-2Q_{0}Q_{n})=\left(  f_{n}^{2}-g_{n}^{2}\right)  (1+o(1)).
\]
Thus if (41) holds then (40) holds too. Hence the proof follows from Theorem 15
\end{proof}

Theorem 16 shows that one can construct the large and easily checkable classes
of the finite-zone PT-symmetric periodic potentials, by constructing the
potentials $q$ so that the Fourier coefficients of $g=\operatorname{Im}q$ is
greater (by absolute value) than the Fourier coefficient of
$f=\operatorname{Re}q.$ One of them are given in the next theorem.

\begin{theorem}
Suppose that (2) holds, $g\in W_{1}^{s+1}(a,a+1)\cap W_{1}^{s}[0,1]$ for some
$s\geq0$ and $a\in\lbrack0,1)$ and $g^{(s)}$ has a jump discontinuity at $a$
with size of the jump $c:=g^{(s)}(a+0)-g^{(s)}(a-0),$ while either $f\in
W_{1}^{s+1}[0,1]$ or $f\in W_{1}^{s+1}(b,b+1)\cap W_{1}^{s}[0,1]$ for some
$b\in\lbrack0,1)$ and $f^{(s)}$ has a jump discontinuity at $b$ with size of
the jump $d:=f^{(s)}(b+0)-f^{(s)}(b-0).$ If $\left\vert d\right\vert
<\left\vert c\right\vert $ then the number of gaps in the real part of the
spectrum of $L(q)$ is finite.
\end{theorem}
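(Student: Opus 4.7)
The plan is to verify the hypothesis (41) of Theorem 16 and then invoke that theorem. Under the PT-symmetry assumption in (2), the real part $f=\operatorname{Re}q$ is even and the imaginary part $g=\operatorname{Im}q$ is odd; together with periodicity and $q\in W_1^s[0,1]$, this makes all derivatives $q^{(k)}$ with $k<s$ continuous on the circle, so the boundary terms arising from the first $s$ integrations by parts in the Fourier coefficient integrals cancel automatically.

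The first main step is to compute the leading-order asymptotics of $g_n=\int_0^1 g(x)\sin(2\pi nx)\,dx$ by performing one further integration by parts on the interval $(a,a+1)$. The jump of $g^{(s)}$ at $a$ of size $c$, together with the twin jump at $1-a$ forced by the oddness of $g$, produces a boundary contribution of magnitude $|c|/(2\pi n)^{s+1}$ multiplied by a bounded trigonometric factor $\varphi_s(2\pi na)$, plus a Riemann--Lebesgue remainder of order $o(n^{-s-1})$ coming from $g^{(s+1)}\in L_1(a,a+1)$. An entirely parallel calculation for $f_n=\int_0^1 f(x)\cos(2\pi nx)\,dx$, using the evenness of $f$, gives $f_n=o(n^{-s-1})$ when $f\in W_1^{s+1}[0,1]$, and otherwise $f_n=d\,\psi_s(2\pi nb)/(2\pi n)^{s+1}+o(n^{-s-1})$ with a bounded trigonometric factor $\psi_s$.

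Combining the two expansions and using $|d|<|c|$, I would then derive $|g_n|>\delta|f_n|$ for some $\delta\in(1,|c|/|d|)$ (any $\delta>1$ suffices when $d=0$) and $|g_n|>\beta n^{-s-1}$ with some $\beta>0$, valid for all sufficiently large $n$. This is exactly condition (41), so Theorem 16 delivers the conclusion that the number of gaps in the real part of $\sigma(L(q))$ is finite.

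The hard part will be handling the trigonometric factor $\varphi_s(2\pi na)$, which for generic $a$ need not admit a uniform-in-$n$ lower bound, so the pointwise estimate $|g_n|>\beta n^{-s-1}$ does not follow from the leading asymptotic alone. I would address this either by passing to the product $q_nq_{-n}=f_n^2-g_n^2$, where the combined structure allows the $|c|>|d|$ gap to dominate despite oscillation, and feeding the result into the refined expression (35) for $D(\lambda_n(0),0)$ and $D(\lambda_n(\pi),\pi)$; or by invoking Theorem 14 directly once $\lambda_n(0)$ and $\lambda_n(\pi)$ have been shown nonreal for all large $n$ via $D(\lambda_n(0),0)<0$ and $D(\lambda_n(\pi),\pi)<0$, which is the same target cast in a form where the oscillating factors combine more favourably.
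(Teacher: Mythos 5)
Your top-level route --- establish condition (41) and invoke Theorem 16 --- is exactly the paper's, and your integration-by-parts computation matches the paper's argument except at one crucial point: the paper obtains $\left\vert g_{n}\right\vert =\left\vert c\right\vert (2\pi n)^{-s-1}+o(n^{-s-1})$ with \emph{no} oscillating factor, whereas you carry a factor $\varphi_{s}(2\pi na)$ and then, correctly given that factor, cannot secure the lower bound $\left\vert g_{n}\right\vert >\beta n^{-s-1}$. The missing idea is that the hypotheses kill the oscillation. The scenario you describe --- a jump of $g^{(s)}$ at $a$ \emph{together with a twin jump at} $1-a$ --- is inconsistent with the assumption $g\in W_{1}^{s+1}(a,a+1)$: on the open interval $(a,a+1)$ the function $g^{(s)}$ is absolutely continuous, so it has no jump there. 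Since oddness of $g=\operatorname{Im}q$ does force a jump of $g^{(s)}$ at $-a$ $(\operatorname{mod}1)$ whenever there is one at $a$, the two points must coincide modulo $1$, i.e. $2a\in\mathbb{Z}$, so $a\in\{0,1/2\}$; likewise evenness of $f$ forces $b\in\{0,1/2\}$. A parity check then shows that the only boundary term that can survive the $s+1$ integrations by parts is the one paired with $\cos(2\pi na)$, which equals $1$ or $(-1)^{n}$, while all $\sin(2\pi na)$ terms vanish at these two values of $a$. Hence $\left\vert \varphi_{s}(2\pi na)\right\vert =1$ for every $n$, the modulus asymptotics $\left\vert g_{n}\right\vert =\left\vert c\right\vert (2\pi n)^{-s-1}+o(n^{-s-1})$ and $\left\vert f_{n}\right\vert =\left\vert d\right\vert (2\pi n)^{-s-1}+o(n^{-s-1})$ (or $f_{n}=o(n^{-s-1})$) hold exactly as the paper asserts, and (41) follows at once with any $\delta\in(1,\left\vert c\right\vert /\left\vert d\right\vert )$. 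To your credit, you noticed a subtlety that the paper suppresses without comment; but its resolution is this symmetry rigidity, not a different downstream argument.

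Your two fallback strategies would not close the gap in the generic-$a$ world you set up, so they cannot substitute for the observation above. If $\varphi_{s}(2\pi n a)$ really could dip below $(\left\vert d\right\vert /\left\vert c\right\vert )\left\vert \psi_{s}(2\pi nb)\right\vert$ along a subsequence $n_{k}$, then $g_{n_{k}}^{2}-f_{n_{k}}^{2}$ could be negative there, so $P_{2n_{k}}>0$ and $D(\lambda_{n_{k}}(0),0)\geq0$ are entirely possible; by Theorem 8 and Corollary 1 the corresponding bands would be real, and Theorem 14 --- which requires $\lambda_{n}(0)$ and $\lambda_{n}(\pi)$ to be nonreal for \emph{all} large $n$ --- would be inapplicable, leaving nothing to preclude infinitely many gaps in the real part of the spectrum. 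In other words, both of your proposed rescues (the product $q_{n}q_{-n}$ fed into (35), or $D<0$ plus Theorem 14) reduce to precisely the uniform pointwise lower bound you flagged as missing; neither sidesteps it. Once the rigidity $a,b\in\{0,1/2\}$ is supplied, your first two paragraphs become the paper's proof essentially verbatim.
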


\begin{proof}
The Fourier coefficient $g_{n}$ defined in (38) can be calculated by%
\[
g_{n}=\lim_{\varepsilon\rightarrow0}\int_{a+\varepsilon}^{a+1-\varepsilon
}g(x)\sin2\pi nxdx.
\]
Applying $s+1$ times the integration by parts formula we get
\[
\left\vert g_{n}\right\vert =\left\vert c\right\vert (2\pi n)^{-s-1}%
+o(n^{-s-1}).
\]
In the same way we obtain that either $f_{n}=o(n^{-s-1})$ or
\[
\left\vert f_{n}\right\vert =\left\vert d\right\vert (2\pi n)^{-s-1}%
+o(n^{-s-1}).
\]
Therefore if $\left\vert d\right\vert <\left\vert c\right\vert $, then (41)
holds and hence the proof follows from Theorem 16
\end{proof}

\begin{conclusion}
Theorem 4$(d)$ shows that the main part of the spectrum of $L(q)$ with
PT-symmetric periodic potential is real. Indeed, since $\varepsilon
_{n}\rightarrow0$ and $\delta_{n}\rightarrow0$ as $n\rightarrow\infty$, the
length of the real part $\left\{  \lambda_{n}(t):t\in\lbrack\varepsilon
_{n},\pi-\delta_{n}]\right\}  $ of $\Gamma_{n}$ for $n>N(h)$ is of order
$4\pi^{2}n,$ while the length of the nonreal tails $\left\{  \lambda
_{n}(t):t\in\lbrack0,\varepsilon_{n})\right\}  $ and $\left\{  \lambda
_{n}(t):t\in(\pi-\delta_{n},\pi]\right\}  $ is $O(n\varepsilon_{n})$ and
$O(n\delta_{n})$ respectively. Moreover, instead of (14) using more sharp
estimation one can show that $\varepsilon_{n}$ and $\delta_{n\text{ }}$
approach zero rapidly for smooth potential. For example if $q\in S_{p}$ then
$q_{n}=o(n^{-s})$ and the points $\varepsilon_{n},\pi-\delta_{n}$ corresponds
to zero of $D$ defined in (19). It with (25) implies that $\varepsilon
_{n}=o(n^{-s-1}),\delta_{n}=o(n^{-s-1})$. Therefore the length of the nonreal
tails of $\Gamma_{n}$ is $o(n^{-s}).$ Thus the spectrum of the PT-symmetric
Hill operator is asymptotically real and the spectrum of the self-adjoint Hill
operator is completely real. \ Therefore one can say that they have a closely
spectral characteristic.

However, besides the reality of the spectrum the self-adjoint operators have
the following main characteristics: they have orthogonal projections, spectral
resolution of the identity operator, spectral representation and have no
spectral singularities. The reality of spectrum of PT-symmetric operator does
not guarantee to hold the listed characteristics of the self-adjoint operator
for the PT-symmetric operator. For example, if
\begin{equation}
q(x+1)=q(x),\text{ }q\in L_{1}[0,1],\text{ }q_{-n}=0,\text{ }q_{n}%
\in\mathbb{R}\backslash\left\{  0\right\}  ,\text{ }\forall n=1,2,...
\end{equation}
then $q$ is PT-symmetric potential and the spectrum is completely real:
$\sigma(L(q))=[0,\infty).$ Nevertheless $L(q),$ in general, has no orthogonal
projections and has infinitely many spectral singularities (see [8, 26]). In
this example $\lambda_{n}(0)$ and $\lambda_{n}(\pi)$ are the spectral
singularities if they are double eigenvalues and there exist associated
function corresponding to their eigenfunctions. Moreover, it happens for
almost all potentials (43) (see [28]). If $\lambda_{n}(0)$ and $\lambda
_{n}(\pi)$ are simple and nonreal for $n>N(h)$ then $\Gamma_{n}$ has
complexation points $\lambda_{n}(\varepsilon_{n})$ and $\lambda_{n}(\pi
-\delta_{n})$ which are spectral singularities (see Theorem 11$(a))$. Thus, in
this case $L(q)$ also has infinitely many spectral singularities and hence is
not asymptotically spectral operator.

Theorem 11 shows that if $\lambda_{n}(0)$ and $\lambda_{n}(\pi)$ are real and
simple eigenvalues for large $n$ then the PT-symmetric operator $L(q)$ may
have only finitely many spectral singularities and is asymptotically spectral.
Hence the simplicity plus reality of $\lambda_{n}(0)$ and $\lambda_{n}(\pi)$
for large $n,$ namely the condition (29) on the potential $q,$ provide the
PT-symmetric operator $L(q)$ to be close to the self adjoint Hill operator
$L(q)$ for $q\in S_{p}$.

Finally, note that Theorem 11$(b)$ (the equivalence of $1)$ and $3)$) shows
the asymptotic connection of reality of $\sigma(L(q))$ and spectrality of
$L(q)$ for $q\in S_{p}.$ On the other hand, theorems 12 and 13 for Example 1
show that if $ab>0$ and $a\neq b$ then the spectrum of $H(a,b)$ coincides with
the spectrum of the self-adjoint Mathieu-Hill operator, $L(2c\cos2\pi x),$ all
eigenvalues of $H_{t}(a,b)$ for all $t\in\lbrack0,2\pi)$ are real and simple
and hence $\sigma(H(ab))$ has no spectral singularities and complexation
points, while $H(a,b)$ is not a spectral operator. Thus the relation between
the spectrum and spectrality of the PT-symmetric periodic operators is
complicated and depends on the subclasses of the PT-symmetric potentials.
\end{conclusion}


\begin{thebibliography}{99}                                                                                               %


\bibitem {}Z. Ahmed , Energy band structure due to a complex, periodic, PT
-invariant potential Phys. Lett.A, 286 (2001), 231--235.

\bibitem {}Non-Selfadjoint Operators in Quantum Physics: Mathematical Aspects,
First Edition. Edited by Fabio Bagarello, Jean-Pierre Gazeau, Franciszek Hugon
Szafraniec and Miloslav Znojil, John Wiley \& Sons, Inc. Published, 2015.

\bibitem {}C. M. Bender, G. V. Dunne and P. N. Meisinger, Complex periodic
potentials with real band spectra Phys. Lett. A, 252 (1999), 272--276.

\bibitem {}E Caliceti, S Graffi, Reality and non-reality of the spectrum of PT
-symmetric operators: Operator-theoretic criteria, Pramana Journal of Physics,
73 (2009), 241-249

\bibitem {}J. M. Cervero, PT -symmetry in one-dimensional quantum periodic
potentials Phys. Lett. A, 317 (2003), 26--31.

\bibitem {}J. M. Cervero and A. Rodriguez , The band spectrum of periodic
potentials with PT -symmetry J. Phys. A:Math. Gen. 37 (2004), 10167-10177.

\bibitem {}M. S. P. Eastham, The Spectral Theory of Periodic Differential
Operators, New York: Hafner, 1974.

\bibitem {}M. G. Gasymov, Spectral analysis of a class of second-order
nonself-adjoint differential operators, Fankts. Anal. Prilozhen 14 (1980), 14-19.

\bibitem {}F. Gesztesy and V. Tkachenko, A criterion for Hill's operators to
be spectral operators of scalar type, J. Analyse Math., 107 (2009), 287--353.

\bibitem {}H. F. Jones, The energy spectrum of complex periodic potentials of
Kronig--Penney type Phys. Lett. A, 262 (1999), 242-244.

\bibitem {}K.G. Makris, R. El-Ganainy, D.N. Christodoulides, Z.H. Musslimani,
PT-Symmetric Periodic Optical Potentials, Int J Theor Phys, 50 (2011), 1019--1041.

\bibitem {}V. A. Marchenko, \textquotedblright Sturm-Liouville Operators and
Applications,\textquotedblright\ Birkhauser Verlag, Basel, 1986.

\bibitem {}D. C. McGarvey, Differential operators with periodic coefficients
in $L_{p}(-\infty,\infty)$, J. Math. Anal. Appl. 11 (1965), 564-596.

\bibitem {}D. C. McGarvey, Perturbation results for periodic differential
operators in $L_{p}(-\infty,\infty)$, J. Math. Anal. Appl. 12 (1965), 187-234.

\bibitem {}A. Mostafazadeh, Psevdo-hermitian representation of quantum
mechanics, International Journal of Geometric Methods in Modern Physics, 11 (2010),1191-1306.

\bibitem {}F. S. Rofe-Beketov, The spectrum of nonselfadjoint differential
operators with periodic coefficients, Soviet Math. Dokl. 4 (1963), 1563-1566.

\bibitem {}K. C. Shin, On the shape of spectra for non-self-adjoint periodic
Schr\"{o}dinger operators, Journal of Physics A: Mathematical and General, 37
(2004), 8287-8291.

\bibitem {}A. A. Shkalikov, O. A. Veliev, On the Riesz basis property of the
eigen- and associated functions of periodic and antiperiodic Sturm-Liouville
problems, Math. Notes, 85 (2009), 647-660.

\bibitem {}E. C. Titchmarsh, ''Eigenfunction Expansions Associated with Second
Order Differential Equations,'' Part II, Oxford Univ. Press, London, 1958.

\bibitem {}V. A. Tkachenko, Spectral analysis of nonselfadjoint Schrodinger
operator with a periodic complex potential, Soviet Math. Dokl., 5 (1964), 413-415.

\bibitem {}O. A. Veliev, The spectrum and spectral singularities of
differential operators with complex-valued periodic coefficients. Differential
Cprime Nye Uravneniya, {19} (1983), 1316-1324.

\bibitem {}O. A. Veliev, M. Toppamuk Duman, The spectral expansion for a
nonself-adjoint Hill operators with a locally integrable potential, J. Math.
Anal. Appl. 265 (2002), 76-90.

\bibitem {}O. A. Veliev, Asymptotic analysis of non-self-adjoint Hill's
operators, Central European Journal of Mathematics, 11 (2013), 2234-2256.

\bibitem {}O. A. Veliev, Isospectral Mathieu-Hill operators, Letters in
Mathematical Physics, 103 (2013), 919-925.

\bibitem {}O. A. Veliev, Spectral Analysis of the Non-self-adjoint
Mathieu-Hill Operator, arXiv:1202.4735v4, (2013).

\bibitem {}O. A. Veliev, Spectral Problems of a Class of Non-self-adjoint
One-dimensional Schrodinger Operators, J. Math. Anal. Appl., 422 (2015), 1390--1401.

\bibitem {}O. A. Veliev, On the spectral singularities and spectrality of the
Hill's Operator, Operators and Matrices, 10 (2016), 57-71.

\bibitem {}O. A. Veliev, On a Class of Non-self-adjoint Multidimensional
Periodic Schrodinger Operators, arXiv:1604.02061v2 \ (2016).
\end{thebibliography}
\end{document}